\documentclass[12pt, leqno]{amsart}
\usepackage{amsmath,amsfonts,amsthm,latexsym,amssymb,mathrsfs}
\usepackage{amsmath}
\usepackage{amssymb}
\usepackage{graphicx}
\usepackage{amsthm}
\usepackage{amsfonts}
\usepackage{amscd}

\def\d#1{{#1\kern-0.4em\char"16\kern-0.1em}}
\def\D#1{{\raise0.2ex\hbox{-}\kern-0.4em#1}}
\def\dzn{,\kern-0.1em,}
\newtheorem{defi}{Definition}[section]
\newtheorem{teor}[defi]{Theorem}

\newtheorem{prop}[defi]{Proposition}

\newtheorem{rem}[defi]{Remark}
\newtheorem{corollary}[defi]{Corollary}

\frenchspacing

\textwidth=13.5cm
\textheight=23cm
\parindent=16pt
\oddsidemargin=-0.5cm
\evensidemargin=-0.5cm
\topmargin=-0.5cm

\def\A{{\mathcal{A}}}
\def\W{{\mathcal{W}}}
\def\N{{\mathcal{N}}}
\def\D{{\mathcal{D}}}
\def\KD{{\mathcal{KD}}}
\def\B{{\mathcal{B}}}
\def\C{{\mathcal{C}}}
\def\F{{\mathcal{F}}}
\def\NN{\mathbb{N}}
\def\CC{\mathbb{C}}

\def\dj{d\kern-0.4em\char"16\kern-0.1em}
\def\Dj{\mbox{\raise0.3ex\hbox{-}\kern-0.4em D}}

\def\R{{\mathcal{R}}}
\def\N{{\mathcal{N}}}
\def\D{{\mathcal{D}}}
\def\B{{\mathcal{B}}}
\def\C{{\mathcal{C}}}
\def\F{{\mathcal{F}}}
\def\K{{\mathcal{K}}}
\def\L{{\mathcal{L}}}
\def\X{{\mathcal{X}}}
\def\H{{\mathcal{H}}}

\def\T{{\mathcal{T}}}
\def\G{{\mathcal{G}}}

\def\NN{\mathbb{N}}
\def\CC{\mathbb{C}}

\def\dj{d\kern-0.4em\char"16\kern-0.1em}
\def\Dj{\mbox{\raise0.3ex\hbox{-}\kern-0.4em D}}

\def\Poly{{\rm Poly}}
\def\acc{{\rm acc}}
\def\iso{{\rm iso}}
\def\left{{\rm left}}
\def\right{{\rm right}}
\def\ind{{\rm ind}}

\begin{document}
\date{}
\title[GENERALIZED B-FREDHOLM  ELEMENTS]{GENERALIZED B-FREDHOLM BANACH \\ ALGEBRA ELEMENTS}

\author[M. D. CVETKOVI\'C]{MILO\v S D. CVETKOVI\'C}
\address{UNIVERSITY OF NI\v S\\ FACULTY O SCIENCES AND MATHEMATICS\\ VI\v SEGRADSKA 33, P.O. BOX 224, 18000 NI\v S, SERBIA}
\email{milosCvetkovic83@gmail.com}

\author[E. BOASSO]{ENRICO BOASSO}
\email{enrico\_odisseo@yahoo.it}

\author[S. \v C. \v ZIVKOVI\'C-ZLATANOVI\'C]{SNE\v ZANA \v C. \v ZIVKOVI\'C-ZLATANOVI\'C}
\address{UNIVERSITY OF NI\v S\\ FACULTY O SCIENCES AND MATHEMATICS\\ VI\v SEGRADSKA 33, P.O. BOX 224, 18000 NI\v S, SERBIA}
\email{mladvlad@open.telekom.rs}

\begin{abstract}\noindent Given a (not necessarily continuous) homomorphism between Banach algebras $\T\colon\A\to\B$,
an element $a\in\A$ will be said to be B-Fredholm (respectively
generalized B-Fredholm) relative to $\T$, if $\T(a)\in \B$ is Drazin
invertible (respectively Koliha-Drazin invertible). In this article,
the aforementioned elements will be characterized and their main
properties will be studied. In addition, perturbation properties
will be also considered.
\end{abstract}
\subjclass[2010]{Primary 46H05, 47A10; Secondary 47A53, 47A55}

\keywords{ Banach algebra, homomorphism,
B-Fredholm element, generalized B-Fredholm element, perturbation
class.}

\maketitle
\section{Introduction}

The Atkinson theorem states that necessary and sufficient for a
Banach space operator to be Fredholm is that its coset in the Calkin
algebra is invertible. This well known result led to the
introduction of a Fredholm theory relative to a Banach algebra
homomorphism, see \cite{Harte}. This theory has been developed by
many authors, which have also studied other classes of objects such
that Weyl, Browder and  Riesz Banach algebra elements
relative to a (not necessarily continuous) homomorphism, see for
example \cite{Harte, H3, H2, H4, MR1, GR, MR, Dragan, H5, Bak,
ZH, MMH, ZDH, ZDH2}. \par

\indent Recall that the theory of Fredholm operators was
generalized. In fact, the notion of B-Fredholm operator was
introduced and studied, see section $2$ or  \cite{Ber3, Mu2}.  In
addition,  given $\X$ a Banach space and $T\in \L (\X)$ a bounded
and linear map, according to \cite[Theorem 3.4]{Ber1}, $T$ is
B-Fredholm if and only if $\tilde{\pi}(T) \in \L (\X )/ \F (\X )$
is Drazin invertible, where $\F (\X)$ denotes the ideal of finite
rank operators defined on $\X$ and $\tilde{\pi}: \L (\X ) \to \L (\X
)/ \F (\X )$ is the quotient homomorphism. This Atkinson-type
theorem for B-Fredholm operators leads to the following definition.
Let $\A$ and $\B$ be two complex unital Banach algebras and consider
a (not necessarily continuous) homomorphism $\T\colon \A\to\B$. The
element $a\in\A$ will be said to be B-Fredholm (respectively
generalized B-Fredholm) relative to $\T$, if $\T(a)\in \B$ is Drazin
invertible (respectively Koliha-Drazin invertible).\par

\indent The main objective of this article is to study (generalized) B-Fredholm Banach algebra
elements relative to a (not necessarily continuous) homomorphism. In section 3, after having recalled some
preliminary facts in section 2, the aforementioned elements will be characterized and
studied. What is more, B-Weyl and B-Browder elements will be also considered. On the other hand,
in section 4 perturbation properties of B-Fredholm elements will be studied and in section 5 perturbations of
(generalized) B-Fredholm elements with equal spectral idempotents with respect to a (not necessarily continuous) homomorphism
will be considered.\par

\section{Preliminary Definitions and Facts}

From now on $\A$ will denote a complex unital Banach algebra with
identity $1$. Let $\A^{-1}$, $\A_{\left}^{-1}$, $\A_{\right}^{-1}$,
$\A^\bullet$ and $\A^{nil}$ denote the set of all invertible
elements in $\A$, the set of all left  invertible elements in
$\A$, the set of all right  invertible elements in
$\A$, the set of all idempotents in $\A$ and the set of all
nilpotent elements in $\A$, respectively. Given $a\in \A$, $\sigma
(a)$ and $\iso \, \sigma (a)$ will stand for the spectrum and the
set of isolated spectral points of $a\in \A$, respectively. Recall,
if $K\subseteq \mathbb{C}$, then $\acc \, K$ is the set of limit
points of $K$ and $\iso \, K=K\setminus \acc \, K$. \par

\indent An element $a \in \A$ is said to be Drazin invertible, if there exists a necessarily unique
$b\in \A$ and some  $k\in\NN$ such that
$$
bab=b, \; \; \; \; ab=ba, \; \; \; \;  a^{k}ba=a^k.
$$

\noindent If the Drazin inverse of $a$ exists, then it will be
denoted by $a^d$. In addition, the  \it index of $a$, \rm  which
will be denoted by  $\ind \, (a)$, is the least non-negative integer
$k$ for which the above equations hold. When $\ind \, (a)=1$, $a$
will be said to be \it group invertible\rm, and in this case its
Drazin inverse will be referred as the group inverse of $a$;
moreover, it will be denoted by $a^\sharp$. The set of all Drazin
invertible (respectively group invertible) elements of $\A$ will be
denoted by $\A^{\D}$ (respectively $A^\sharp$); see \cite{Dr, K,
RS}.\par

\indent Recall that $a\in \A$ is said to be \it generalized Drazin \rm  or \it Koliha-Drazin invertible\rm,
if there exists $b\in A$ such that
$$
bab=b, \; \; \; \;  ab=ba,  \; \; \; \; aba=a+w,
$$

\noindent where $w\in\A^{qnil}$ (the set of all quasinilpotent elements of $\A$). The Koliha-Drazin inverse is unique, if it exists,
and it will be denoted by $a^D$; see \cite{Koliha,Lu, R2, R3}.

\indent Note that if $a\in \A$ is Drazin invertible, then $a$ is
generalized Drazin invertible. In fact, if $k=\ind \, (a)$, then
$(aba-a)^k=0$. In addition, according to \cite[Proposition 1.5]{Lu}
(see also \cite[Theorem 4.2]{Koliha}), necessary and sufficient for
$a\in \A$ to be Koliha-Drazin invertible but not invertible is that
$0\in \iso \, \sigma (a)$. The set of all Koliha-Drazin invertible
elements of $\A$ will be denoted by $\A^{\KD}$.

Recall that given a Banach algebra $\A$, a subset $\R\subset \A$ is said to be a \it regularity, \rm if the
following two conditions are satisfied (see for example \cite{KM, Mu}):\par
\medskip
(i) Given $a \in \A$ and $n \in \NN$, $a \in \R$ if
and only if $a^n \in \R$.\par
(ii) If $a,b,c,d \in \A$ are
mutually commuting elements satisfying $ac+bd=1$, then necessary and sufficient for $ab\in \R$ is that $a,b \in \R$.\par
\medskip
Given a regularity $\R\subset \A$, it is possible to define the
spectrum of $a\in\A$ corresponding to $\R$ as $\sigma_\R (a)=\{
\lambda\in\CC\colon a-\lambda\notin \R\}$ ($a\in\A$). This spectrum
satisfies the spectral mapping theorem for every $a\in \A$ and every
analytic function defined on a neighbourhood of $\sigma (a)$ which
is non-constant on each component of its domain of definition; see
\cite[Theorem 1.4]{KM}. In particular, according to \cite[Theorem 2.3]{Ber1} and
\cite[Theorem 1.2]{Lu}, the sets $\A^{\D}=\{a\in \A\colon a\hbox{ is Drazin
invertible}\}$ and $\A^{\KD}=\{a\in \A\colon a\hbox{ is
Koliha-Drazin invertible}\}$ are regularities, respectively. The corresponding
Drazin and Koliha-Drazin spectra will be denoted by $\sigma_\D (a)$
and $\sigma_\KD (a)$, respectively.\par
\bigskip

\indent Let $\X$ be a Banach space and denote by $\L(\X)$ the
algebra of all bounded and linear maps defined on and with values in
$\X$. If $T\in \L(\X)$, then $T^{-1}(0)$ and $R(T)$ will stand for
the null space and the range of $T$ respectively. Note that $I\in
\L(\X)$ will denote the identity map of $\X$. In addition,
$\K(\X)\subset \L(\X)$ will stand for the closed ideal of compact
operators. Consider $\mathcal{C}(\X)$ the Calkin algebra over $\X$,
i.e.,  the quotient algebra $\mathcal{C}(\X)=\L(\X) / \K(\X)$.
Recall that $\mathcal{C}(\X)$ is itself a Banach algebra with the
quotient norm. Let
$$
\pi\colon \L(\X)\to \mathcal{C}(\X)
$$
denote the quotient map.\par Recall that  $T\in \L(\X)$ is said to
be a \it Fredholm operator, \rm if $T^{-1}(0)$ and $\X/R(T)$ are
finite dimensional. Denote by $\Phi (\X)$ the set of all Fredholm
operators defined on $\X$. It is well known that $\Phi (\X)$ is a
multiplicative open semigroup in $\L(\X)$ and that
$$
\Phi (\X)= \pi^{-1}(\mathcal{C}(\X)^{-1}).
$$

\indent Atkinson's theorem motivated the Fredholm theory relative to
homomorphism between two Banach algebras as well as the introduction
of Browder and Weyl elements relative to a such homomorphism. This
theory was introduced in \cite{Harte}. Next follow the main notions.

\begin{defi}\label{def1}\rm Let $\A$ and $\B$ be two unital Banach algebras and consider  a (not necessarily continuous) homomorphism
$\T:  \A \to \B$. An element $a\in \A$ will be said to be\par
\noindent {\rm (i)}  \it Fredholm, \rm if $\T(a)$ is invertible in $\B$;\par
\noindent {\rm (ii)} \it Weyl, \rm if there exist $b,c \in \A$,
$b\in\A^{-1}$ and $c\in \T^{-1}(0)$, such that $a=b+c$; \par
\noindent {\rm (iii)} \it Browder, \rm if there exist $b,c \in \A$,
$b\in\A^{-1}$, $c\in \T^{-1}(0)$ and $bc=cb$ ,  such that $a=b+c$.
\end{defi}

\indent It is worth noting that the definitions of Weyl and Browder
elements are also motivated from the Banach space operator case (see
\cite[p.431]{Harte}). The sets of Fredholm, Weyl and Browder
elements relative  to the homomorphism $\T\colon \A\to\B$ will be
denoted by $\F_{\T}(\A )$, $\W_{\T}(\A )$ and $\B_{\T}(\A )$,
respectively. Naturally, these sets lead to the introduction of the
corresponding spectra.

\begin{defi}\label{def2}\rm Let $\A$ and $\B$ be two unital Banach algebras and consider  a (not necessarily continuous) homomorphism
$\T:  \A \to \B$. Given $a\in \A$, the \it Fredholm spectrum, \rm the \it Weyl spectrum \rm  and the \it Browder spectrum \rm of $a$
relative  to the homomorphism $\T\colon \A\to\B$ is\par
\noindent {\rm (i)}  $\sigma_{\F_{\T}}(a)=\{\lambda \in \CC : a-\lambda \not\in \F_{\T}(\A )\}=\sigma (\T (a))$,\par
\noindent {\rm (ii)} $\sigma_{\W_{\T}}(a)=\{\lambda \in \CC : a-\lambda \not\in \W_{\T}(\A )\}$,\par
\noindent {\rm (iii)}  $\sigma_{\B_{\T}}(a)=\{\lambda \in \CC : a-\lambda \not\in \B_{\T}(\A )\}$,\par
\noindent respectively.
\end{defi}

\indent It is clear that  $\B_{\T}(\A ) \subset \W_{\T}(\A ) \subset
\F_{\T}(\A )$ and that $\sigma_{\F_{\T}}(a) \subset
\sigma_{\W_{\T}}(a) \subset \sigma_{\B_{\T}}(a) \subset \sigma(a)$.
Also it is known that the sets $\sigma_{\F_{\T}}(a),
\sigma_{\W_{\T}}(a)$ and $\sigma_{\B_{\T}}(a)$ are non-empty and
compact. To learn the main properties of these objects, see for
example \cite{Harte, H3, H2, H4, MR1, GR, MR, Dragan, H5, Bak,
ZH, MMH, ZDH, ZDH2}.
\bigskip

\indent Recall that an operator $T\in \L (\X )$ is said to be \it
B-Fredholm, \rm if there is $n\in\NN$ such that $R(T^n)$ is closed
and $T\mid_{R(T^n)} \in\L (R(T^n))$ is Fredholm, see  \cite{Ber3,
Mu2}. Denote by $\mathcal{BF}(\X )$ the class of B-Fredholm operators
defined on the Banach space $\X$.  According to \cite[Theorem
3.4]{Ber1}, $T \in \mathcal{BF}(\X )$  if and only if $\tilde{\pi}(T)
\in (\L (\X )/ \F (\X ))^\D$, where $\F (\X)$ denotes the ideal of
finite rank operators defined on $\X$ and $\tilde{\pi}: \L (\X ) \to
\L (\X )/ \F (\X )$ is the quotient homomorphism. In addition, given
a Hilbert space $\H$, according to \cite[Theorem 3.12(ii)]{Boasso1},
$\pi (\mathcal{BF}(\H))=\mathcal{C}(\H)^D$. Moreover, B-Weyl operators
were introduced in \cite{Ber2}  and according to  \cite[Corollary
4.4]{Ber2}, $T \in \L (\X )$ is a B-Weyl if and only if $T=S+F$,
where $S\in \L (\X )^\D$ and $F\in \F (\X )$.\par

\indent On the other hand,  the classes of Riesz-Fredholm and power
compact-Fredholm operators on the Banach space $\X$ were introduced
in \cite{Boasso1} and they will be denoted by $\mathcal{RF}(\X )$
and  $\mathcal{PKF}(\X )$, respectively. According to \cite[Theorem
3.11]{Boasso1}, $\pi (\mathcal{RF}(\X))=\C(\X)^{\KD}$ and $\pi
(\mathcal{PKF}(\X))=\C(\X)^{\D}$.\par \indent These observations led
to the following definition.

\begin{defi}\label{def3}\rm Let $\A$ and $\B$ be two unital Banach algebras and consider a (not necessarily continuous)  homomorphism $\T\colon \A\to\B$.
An element $a\in \A$ is said to be \par \noindent {\rm (i)}
\it B-Fredholm \rm (respectively \it B-Fredholm of degree $k$\rm), if $\T(a)\in
\B^\D$ (respectively $\T(a)\in \B^\D$ and $\ind \, T(a)=k$);\par
\noindent {\rm (ii)} \it B-Weyl \rm (respectively \it B-Weyl of degree $k$\rm), if
there exist $b,c \in  \A$, $b\in \A^\D$ (respectively $b\in\A^\D$
and $\ind \, b=k$) and $c\in \T^{-1}(0)$, such that $a=b+c$;\par
\noindent {\rm (iii)} \it B-Browder \rm (\it B-Browder of degree $k$\rm) if there
exist  $b,c \in \A$, $bc=cb$, $b\in \A^\D$ (respectively $b\in\A^\D$
and $\ind \, b=k$) and $c\in \T^{-1}(0)$, such that $a=b+c$;\par
\noindent {\rm (iv)} \it generalized B-Fredholm\rm, if $\T(a)\in
\B^{\KD}$;\par \noindent {\rm (v)} \it generalized B-Weyl\rm, if there
exist $b,c \in  \A$, $b\in \A^\KD$ and $c\in \T^{-1}(0)$, such that
$a=b+c$;\par  \noindent {\rm (vi)} \it generalized B-Browder\rm, if there
exist $b,c \in \A$, $bc=cb$, $b\in \A^\KD$ and $c\in \T^{-1}(0)$,
such that $a=b+c.$
\end{defi}

\indent The set of B-Fredholm (respectively B-Fredholm of degree
$k$, B-Weyl, B-Weyl of degree $k$, B-Browder, B-Browder of degree
$k$, generalized B-Fredholm, generalized B-Weyl and generalized
B-Browder) elements of the unital Banach algebra $\A$ relative to
the homomorphism $\T\colon \A\to \B$ will be denoted by
$\B\F_{\T}(\A)$ (respectively $\B\F^{(k)}_{\T}(\A)$,
$\B\W_{\T}(\A)$, $\B\W^{(k)}_{\T}(\A)$, $\B\B_{\T}(A)$,
$\B\B^{(k)}_{\T}(\A)$, $\mathcal{GBF}_{\T}(\A)$,
$\mathcal{GBW}_{\T}(\A)$ and $\mathcal{GBB}_{\T}(\A)$). Next the
corresponding spectra will be introduced.

\begin{defi}\label{def4}\rm Let $\A$ and $\B$ be two unital Banach algebras and consider  a (not necessarily continuous) homomorphism
$\T:  \A \to \B$. Given $a\in \A$, the \it B-Fredholm spectrum\rm, the
\it B-Weyl spectrum\rm, the \it B-Browder spectrum\rm, the \it generalized B-Fredholm
spectrum\rm, the \it generalized B-Weyl spectrum \rm and the \it generalized
B-Browder spectrum \rm of $a$ relative to the homomorphism $\T\colon
\A\to\B$ is\par \noindent {\rm (i)} $\sigma_{\B\F_{\T}}(a)=\{\lambda
\in \CC : a-\lambda \not\in \B\F_{\T}(\A )\}=\sigma_{\D}(\T
(a))$,\par \noindent {\rm (ii)} $\sigma_{\B\W_{\T}}(a)=\{\lambda \in
\CC : a-\lambda \not\in \B\W_{\T}(\A )\}$,\par \noindent {\rm (iii)}
$\sigma_{\B\B_{\T}}(a)=\{\lambda \in \CC : a-\lambda \not\in
\B\B_{\T}(\A )\}$,\par \noindent {\rm (iv)}
$\sigma_{\mathcal{GBF}_{\T}}(a)=\{\lambda \in \CC : a-\lambda
\not\in \mathcal{GBF}_{\T}(\A )\}=\sigma_{\KD}(\T (a))$,\par
\noindent {\rm (v)} $\sigma_{\mathcal{GBW}_{\T}}(a)=\{\lambda \in
\CC : a-\lambda \not\in \mathcal{GBW}_{\T}(\A )\}$,\par \noindent
{\rm (vi)} $\sigma_{\mathcal{GBB}_{\T}}(a)=\{\lambda \in \CC :
a-\lambda \not\in \mathcal{GBB}_{\T}(\A )\}$,\par \noindent
respectively.
\end{defi}

\begin{rem}\label{rem5}\rm Let $\A$ and $\B$ be two unital Banach algebras and consider  a (not necessarily continuous) homomorphism
$\T:  \A \to \B$ and $a\in A$. Then, it is not difficult to prove
the following statements:\par \noindent {\rm (i)} $\F_{\T}(\A)
\subseteq \B\F_{\T}(\A)\subseteq \mathcal{GBF}_{\T}(\A)$.\par

\noindent {\rm (ii)} $\B\F_{\T}(\A)+\T^{-1}(0)= \B\F_{\T}(\A)$.\par

\noindent {\rm (iii)} $\W_{\T}(\A)\subseteq
\B\W_{\T}(\A)=\A^\D+\T^{-1}(0)\subseteq \A^\KD+\T^{-1}(0)=\G\B\W_{\T}(\A)$ and $\B_{\T}(\A) \subseteq
\B\B_{\T}(\A)\subseteq \G\B\B_{\T}(\A)$.\par

\noindent {\rm (iv)} $\A^\D\subseteq
\B\B_{\T}(\A)\subseteq \B\W_{\T}(\A)\subseteq\B\F_{\T}(\A)$ and
$\A^\KD\subseteq \G\B\B_{\T}(\A)\subseteq \G\B\W_{\T}(\A)\subseteq\G\B\F_{\T}(\A)$.\par

\noindent {\rm (v)}
$\mathcal{GBF}_{\T}(\A)+\T^{-1}(0)=\mathcal{GBF}_{\T}(\A)$.\par

\noindent {\rm (vi)} $\A^\D\subseteq \A^{\KD}\subseteq
\mathcal{GBF}_{\T}(\A)$.\par
\noindent {\rm (vii)}
$\sigma_{\mathcal{GBF}_{\T}}(a)\subseteq
\sigma_{\mathcal{\B\F}_{\T}}(a)\subseteq
\sigma_{\mathcal{\F}_{\T}}(a)$.\par
\noindent {\rm (viii)}
$\sigma_{\G\B\W_{\T}}(a)\subseteq\sigma_{\B\W_{\T}}(a)\subseteq \sigma_{\W_{\T}}(a)$ and
$\sigma_{\G\B\B_{\T}}(a)\subseteq\sigma_{\B\B_{\T}}(a)\subseteq \sigma_{\B_{\T}}(a)$.\par
 \noindent
{\rm (ix)} $\sigma_{\mathcal{BF}_{\T}}(a)\subseteq
\sigma_{\B\W_{\T}}(a)\subseteq
\sigma_{\B\B_{\T}}(a)\subseteq\sigma_\D (a)$ and
$\sigma_{\mathcal{GBF}_{\T}}(a)\subseteq
\sigma_{\G\B\W_{\T}}(a)\subseteq
\sigma_{\G\B\B_{\T}}(a)\subseteq\sigma_\KD (a)$. \par
\noindent {\rm
(x)} $\sigma_{\mathcal{GBF}_{\T}}(a)\subseteq
\sigma_{\KD}(a)\subseteq \sigma_\D(a)$.
\end{rem}

Finally, given $S \subset \A$, $ \Poly^{-1}(S)=\{a\in\A\colon
\exists \hbox{ } p\in \CC [X],  p\neq 0 \hbox{ and } p(a) \in
S\}$, where $\CC [X]$ is the  algebra of complex polynomials. In
particular, $\Poly^{-1}(\{$ $0\})$ is the set of algebraic elements
of $\A$.
\bigskip

\section{Generalized B-Fredholm elements}

In this section, the basic properties of the
objects studied in this work will be considered. To this end,
however, first it is necessary to recall some facts. \par

\begin{rem}\label{rem2.1}\rm Let $\A$ be a  unital Banach algebra and consider $a\in \A$. Recall that according to
\cite[Theorem 12(iv)]{Bo}, $\sigma_{\D}(a)= I(a)\cup \, \acc \, \sigma (a)$, where $I(a)=$ iso $\sigma (a)\setminus \Pi(a)$ and $\Pi (a)\subseteq \iso \, \sigma
(a)$ denotes the set of poles of $a\in \A$ (\cite[Remark 10]{Bo}). In
particular $\acc \, \sigma_{\D}(a)= \acc \, \sigma
(a)=\sigma_{\KD}(a)$ (\cite[Theorem 12(iii)]{Bo} and \cite[Proposition 1.5(i)]{Lu}). The following statements can
be easily deduced from this fact.\par \noindent (i) Let $b\in \A$.
Since $\sigma_{\D}(ab)=\sigma_{\D}(ba)$ (\cite[Theorem 2.3]{Boasso}),
$\sigma_{\KD}(ab)=\sigma_{\KD}(ba)$.\par \noindent (ii) According to
\cite[Theorem 2.2]{Boasso}, necessary and sufficient for $\sigma_{\D}(a)$ to be
countable is that $\sigma(a)$ is countable. As a result,
$\sigma_{\KD}(a)$ is countable if and only  if $\sigma_{\D}(a)$ is
countable if and only if  $\sigma(a)$ is countable.
\end{rem}

\indent In the following theorem the main properties of the (generalized) B-Fredholm spectrum will be studied.\par

\begin{teor}\label{teo2.2}Let $\A$ and $\B$ be two  unital Banach algebras and consider a (not necessarily continuous)
homomorphism $\T\colon \A\to\B$. If $a\in \A$, then the following statements hold.\par
\noindent {\rm (i)}  $\B\F_{\T}(\A)$ and $\mathcal{GBF}_{\T}(\A)$ are regularities.\par
\noindent {\rm (ii)} If $f\colon U\to \CC$ is an analytic function defined on a neighbourhood
of $\sigma (a)$ which is non-constant on each component of its domain of definition,
then
$$
\sigma_{\mathcal{BF}_{\T}}(f(a))=f(\sigma_{\mathcal{BF}_{\T}}(a)), \hbox{ and } \sigma_{\mathcal{GBF}_{\T}}(f(a))=f(\sigma_{\mathcal{GBF}_{\T}}(a)).
$$
\noindent {\rm (iii)} $\sigma_{\mathcal{\B\F}_{\T}}( a)$ and
$\sigma_{\mathcal{GBF}_{\T}}(a)$ are closed.\par \noindent {\rm
(iv)} $\sigma_{\mathcal{\B\F}_{\T}}(a)=\emptyset$ if and only if
$a\in \Poly^{-1}(\T^{-1}(0))$, equivalently, $\T (a)\in
\Poly^{-1}(0)$.\par \noindent {\rm (v)}
$\sigma_{\mathcal{GBF}_{\T}}(a)=\emptyset$ if and only if $\acc \,
\sigma_{\mathcal{F}_{\T}}(a)=\emptyset$.\par \noindent {\rm (vi)}
Given $a_1$ and $a_2\in \A$,
$\sigma_{\mathcal{\B\F}_{\T}}(a_1a_2)=\sigma_{\mathcal{\B\F}_{\T}}(a_2a_1)$
and
$\sigma_{\mathcal{GBF}_{\T}}(a_1a_2)=\sigma_{\mathcal{GBF}_{\T}}(a_2a_1)$.\par
\noindent {\rm (vii)} $\sigma_{\mathcal{\B\F}_{\T}}(a)$ is countable
if and only if $\sigma_{\mathcal{GBF}_{\T}}(a)$ is countable  if and
only if $\sigma_{\mathcal{\F}_{\T}}(a)$ is countable.
\end{teor}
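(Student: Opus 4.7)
The plan is to exploit the identities $\sigma_{\mathcal{BF}_\T}(a)=\sigma_\D(\T(a))$ and $\sigma_{\mathcal{GBF}_\T}(a)=\sigma_\KD(\T(a))$ from Definition \ref{def4} so as to reduce every assertion about these spectra to a corresponding property of $\sigma_\D(\T(a))$ or $\sigma_\KD(\T(a))$ inside $\B$, where the relevant machinery has already been collected in the Preliminaries and in Remark \ref{rem2.1}.

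For (i), I would first record the general lemma that the $\T$-preimage of any regularity $\R\subseteq\B$ is a regularity in $\A$: the two defining conditions transfer through $\T$ automatically, since $\T(a^n)=\T(a)^n$ and, given commuting $a,b,c,d\in\A$ with $ac+bd=1$, their $\T$-images are commuting elements of $\B$ satisfying $\T(a)\T(c)+\T(b)\T(d)=1$. Applying the lemma to $\B^\D$ and $\B^\KD$, which are regularities by the facts recalled in Section~2, yields (i). Part (ii) is then immediate from (i) combined with the spectral mapping theorem for regularity spectra (\cite[Theorem 1.4]{KM}); and (iii) follows from the fact, noted in Remark \ref{rem2.1}, that $\sigma_\D(b)$ and $\sigma_\KD(b)$ are closed subsets of the compact set $\sigma(b)$ for every $b\in\B$.

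The main step is (iv). I would argue the equivalence $\sigma_\D(\T(a))=\emptyset$ if and only if $\T(a)\in\Poly^{-1}(0)$. Using the decomposition $\sigma_\D(b)=I(b)\cup\acc\,\sigma(b)$ from Remark \ref{rem2.1}, the hypothesis $\sigma_\D(\T(a))=\emptyset$ forces $\sigma(\T(a))$ to consist of finitely many poles of the resolvent, whence the Riesz idempotents produced by the holomorphic functional calculus furnish a nonzero polynomial annihilating $\T(a)$. The converse is routine, and the equivalence with $a\in\Poly^{-1}(\T^{-1}(0))$ follows at once from $\T(p(a))=p(\T(a))$. Part (v) is a direct consequence of the identity $\sigma_\KD(b)=\acc\,\sigma(b)$ (Remark \ref{rem2.1}) applied to $b=\T(a)$, together with $\sigma(\T(a))=\sigma_{\mathcal{F}_\T}(a)$.

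Parts (vi) and (vii) are then quick: (vi) follows from Remark \ref{rem2.1}(i), i.e.\ $\sigma_\D(xy)=\sigma_\D(yx)$ and $\sigma_\KD(xy)=\sigma_\KD(yx)$, applied to $x=\T(a_1)$ and $y=\T(a_2)$; and (vii) is immediate from Remark \ref{rem2.1}(ii) applied to $\T(a)$, which gives the simultaneous countability of $\sigma(\T(a))$, $\sigma_\D(\T(a))$ and $\sigma_\KD(\T(a))$. I expect (iv) to be the principal obstacle, as it is the only item requiring a genuine spectral/functional-calculus argument rather than a purely formal transfer through $\T$.
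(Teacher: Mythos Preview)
Your proposal is correct and follows essentially the same route as the paper: both reduce every item to the corresponding property of $\sigma_\D$ and $\sigma_\KD$ in $\B$ via the identities $\sigma_{\mathcal{BF}_\T}(a)=\sigma_\D(\T(a))$ and $\sigma_{\mathcal{GBF}_\T}(a)=\sigma_\KD(\T(a))$, invoking the regularity machinery and Remark~\ref{rem2.1} exactly as you do. The only cosmetic difference is in (iv), where the paper simply cites \cite[Theorem 2.1]{Boasso} for the equivalence $\sigma_\D(b)=\emptyset \Leftrightarrow b\in\Poly^{-1}(0)$, whereas you sketch its proof via the holomorphic functional calculus; this is the content of that reference, so (iv) is in fact no harder than the other items.
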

\begin{proof}(i). According to \cite[Theorem 2.3]{Ber1} and \cite[Theorem 1.2]{Lu},  $\A^\D$ and $\A^{\KD}$ are regularities, respectively.
Since $\T\colon \A\to\B$ is an algebra homomorphism, $\B\F_{\T}(\A)$
and $\mathcal{GBF}_{\T}(\A)$ are regularities.\par \noindent (ii).
Apply \cite[Theorem 1.4]{KM} to $\B\F_{\T}(\A)$ and
$\mathcal{GBF}_{\T}(\A)$.\par \noindent (iii). Recall that
$\sigma_{\B\F_{\T}}(a)=\sigma_{\D}(\T (a))$ and
$\sigma_{\mathcal{GBF}_{\T}}(a)=\sigma_{\KD}(\T (a))$. Then, apply
\cite[Proposition 2.5]{Ber1} and \cite[Proposition 1.5(ii)]{Lu}.\par \noindent (iv). Since
$\sigma_{\B\F_{\T}}(a)=\sigma_{\D}(\T (a))$, this statement can be
deduced from \cite[Theorem 2.1]{Boasso}.\par \noindent (v). Recall that, according
to \cite[Proposition 1.5(i)]{Lu}, $\sigma_{\mathcal{GBF}_{\T}}(a)=\sigma_{\KD} (\T (a))=
\acc \, \sigma (\T (a))= \acc \, \sigma_{\mathcal{\F}_{\T}}(
a)$.\par \noindent (vi). Apply \cite[Theorem 2.3]{Boasso} and Remark
\ref{rem2.1}(i).\par \noindent (vii). Apply \cite[Theorem 2.2]{Boasso} and Remark
\ref{rem2.1}(ii).
\end{proof}

\indent Next, (generalized) B-Fredholm elements will be characterized. To this end, however,
some notions need to be recalled.\par

\indent Let $\A$  and $\B$ be two  unital Banach algebras and
consider a (not necessarily continuous) homomorphism $\T\colon \A\to \B$. The kernel and the
range of the homomorphism $\T$ will be denoted by $\T^{-1}(0)$ and
$R(\T)$, respectively. Let $\mathcal{R}_\T (\A)=\{a\in\A\colon \T
(a)\in\B^{qnil}\}$ be the set of Riesz elements of $\A$ relative to
the homomorphism $\T$ and $\mathcal{N}_\T (\A)=\{ a\in\A\colon
\hbox{there exists } k\in\NN \hbox{ such that } a^k\in \T^{-1}(0)
\}=\{a\in\A\colon \T (a)\in\B^{nil}\}$ be the set of $\T$-nilpotent
elements of $\A$; see \cite{Boasso1, ZDH, ZDH2}. Clearly,
$\mathcal{N}_\T (\A)\subseteq \mathcal{R}_\T (\A)$.   \par

\indent On the other hand, the homomorphism $\T\colon \A\to \B$
will be said to have the \it lifting property, \rm if given $q\in
\B^\bullet$, there is $p\in\A^\bullet$ such that $\T (p)=q$, i.e.,
$T(\A^\bullet)=\B^\bullet$, which is equivalent to the conjunction
of the following two conditions: $\T^{-1}(\B^\bullet)=\A^\bullet +
\T^{-1}(0)$ and $\B^\bullet \subset R(\T)$.
 This property does not in general hold. Some
examples that satisfy the lifting property (among others the
radical of a Banach algebra or von Neumann algebras) were
considered in \cite[Remark 3.4]{Boasso1}. In particular, if
$\B^\bullet \subset R(\T)$  and $\T$ has the \it Riesz property,
\rm i.e., if for every $z\in \T^{-1}(0)$, $\sigma (z)$ is either
finite or is a sequence converging to $0$, then $\T$ has the
lifting property, see \cite[Theorem 1]{R} and \cite[Lemma 2]{Dragan}. Therefore, if
$\T\colon\A\to \B$ is surjective and  has the  Riesz property,
then $\T$ has the lifting property. \par

\begin{teor}\label{teo2.3}Let $\A$ and $\B$ be two  unital Banach algebras and consider a (not necessarily continuous)
homomorphism $\T\colon \A\to\B$. Suppose that $\T$ has the
lifting property. Then, the following statements hold.\par \noindent
{\rm (i)} Necessary and sufficient for $a\in \mathcal{GBF}_\T (\A)$
is that there exists $p\in \A^\bullet$ such that $a+p\in \F_\T
(\A)$, $pa(1-p)$ and $(1-p)ap\in \T^{-1}(0)$ and $pap\in
\mathcal{R}_\T (\A)$.\par

\noindent {\rm (ii)} Necessary and sufficient for $a\in
\mathcal{BF}_\T (\A)$ is that there exists $p\in \A^\bullet$ such
that $a+p\in \F_\T (\A)$, $pa(1-p)$ and $(1-p)ap\in \T^{-1}(0)$ and
$pap\in \mathcal{N}_\T (\A)$.\par
\end{teor}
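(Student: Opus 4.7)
The plan is to reduce both statements to the classical characterization of (Koliha--)Drazin invertible elements via a spectral idempotent in $\B$, and then to exchange that idempotent between $\B$ and $\A$ using the lifting hypothesis on $\T$.

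First I would recall (citing \cite{Koliha,Lu}) the standard criterion: for $b\in\B$, one has $b\in\B^{\KD}$ if and only if there exists $q\in\B^\bullet$ with $qb=bq$, $b+q\in\B^{-1}$ and $bq\in\B^{qnil}$; the canonical such idempotent is the spectral projection $q=1-bb^D$ of $b$ at $0$. The analogous statement for $\B^{\D}$ is obtained verbatim by replacing $\B^{qnil}$ with $\B^{nil}$. Both facts are well known and can be quoted from the references already in the paper.

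For the sufficiency in (i), I would let $p\in\A^\bullet$ satisfy the stated conditions, set $q=\T(p)\in\B^\bullet$, and transport everything through $\T$. The hypotheses $\T(pa(1-p))=0$ and $\T((1-p)ap)=0$ give $q\T(a)(1-q)=0$ and $(1-q)\T(a)q=0$, which together are equivalent to $q\T(a)=\T(a)q=q\T(a)q$. Then $\T(a)+q=\T(a+p)\in\B^{-1}$ and $q\T(a)=\T(pap)\in\B^{qnil}$, so by the characterization above $\T(a)\in\B^{\KD}$, i.e., $a\in\mathcal{GBF}_\T(\A)$. For the necessity, assume $a\in\mathcal{GBF}_\T(\A)$ and take $q=1-\T(a)\T(a)^D\in\B^\bullet$. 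The lifting hypothesis supplies $p\in\A^\bullet$ with $\T(p)=q$, and each required condition on $p$ then translates back through $\T$: the invertibility of $\T(a)+q$ yields $a+p\in\F_\T(\A)$; the commutation $q\T(a)=\T(a)q$ gives $\T(pa(1-p))=q\T(a)(1-q)=0$ and $\T((1-p)ap)=(1-q)\T(a)q=0$; and $\T(pap)=q\T(a)q=q\T(a)\in\B^{qnil}$, so $pap\in\mathcal{R}_\T(\A)$.

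Part (ii) would be proved by the identical argument, with $\B^{\KD}$, $\B^{qnil}$ and $\mathcal{R}_\T(\A)$ replaced by $\B^{\D}$, $\B^{nil}$ and $\mathcal{N}_\T(\A)$, respectively; the only change is that the spectral idempotent $q$ now satisfies $\T(a)q\in\B^{nil}$. The main (and essentially the only) non-algebraic step throughout is the appeal to the lifting property in the necessity direction: without it there is no way to promote the spectral idempotent $q\in\B^\bullet$ of $\T(a)$ to an actual idempotent $p\in\A^\bullet$, which is precisely why the hypothesis is built into the theorem.
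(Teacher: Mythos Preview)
Your proposal is correct and follows essentially the same route as the paper's proof: both directions reduce to the spectral-idempotent characterization of $\B^{\KD}$ (respectively $\B^{\D}$) from \cite[Theorem 4.2]{Koliha} (respectively \cite[Proposition 1(a)]{RS}), with the lifting property used only in the necessity direction to pull $q=\T(a)^\pi$ back to some $p\in\A^\bullet$. The paper's argument and yours differ only cosmetically (the paper phrases the commutation step as $pa-ap\in\T^{-1}(0)$ and then multiplies by $1-p$, whereas you write out $q\T(a)(1-q)=0$ directly).
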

\begin{proof}(i). If $a\in\mathcal{GBF}_\T (\A)$, then $\T (a)\in \B^{\KD}$. In particular, according to \cite[Theorem 4.2]{Koliha},
there is $q\in\B^\bullet$ such that $q\T (a)=\T (a)q$, $\T (a)+q\in \B^{-1}$ and $\T (a)q=q\T(a)q\in\B^{qnil}$.
Since $\T\colon \A\to \B$ has the lifting property, there is $p\in \A^\bullet$ such that $\T (p)=q$. \par

Now, the identity $q\T (a)=\T (a)q$ implies that $pa - ap\in
\T^{-1}(0)$. However, multiplying by $1-p$, it is easy to prove that
$pa(1-p)$ and $(1-p)ap\in \T^{-1}(0)$. In addition, since $\T
(a+p)\in \B^{-1}$, $a+p\in \F_\T (\A)$. Finally, since $\T (pap)\in
\B^{qnil}$, $pap\in\mathcal{R}_\T (\A)$.

\indent Suppose that there exists $p\in \A^\bullet$ such that
$a+p\in \F_\T (\A)$, $pa(1-p)$ and $(1-p)ap\in \T^{-1}(0)$ and
$pap\in \mathcal{R}_\T (\A)$. Consequently, $q=\T (p)\in\B^\bullet$
and $q\T (a)=\T (a)q$, $\T (a)+q\in \B^{-1}$ and $\T
(a)q=q\T(a)q\in\B^{qnil}$. Thus, according to \cite[Theorem
4.2]{Koliha}, $\T (a)\in\B^\KD$, equivalently, $a\in
\mathcal{GBF}_\T (\A)$.\par

\noindent (ii). Apply the same argument used in the proof of
statement (i), using in particular \cite[Proposition 1(a)]{RS}
instead of \cite[Theorem 4.2]{Koliha}.
\end{proof}

\indent Next some basic properties of the objects introduced in Definition \ref{def3} will be considered.\par

\begin{teor} \label{inkluzije}Let $\A$ and $\B$ be two  unital Banach algebras and consider a (not necessarily continuous)
homomorphism $\T\colon \A\to\B$. Then, the following statements
hold.\par

\noindent {\rm (i)} $\T^{-1}(0) \subseteq
\T^{-1}(\B^\bullet)\subseteq \B\F_\T(\A)$.\par

\noindent {\rm (ii)} $\A^\bullet\subseteq
\T^{-1}(\B^\bullet)\subseteq \B\F_\T(\A)$.\par

\noindent {\rm (iii)}$\F_\T(\A)$ is a proper subset of
$\B\F_\T(\A)$.\par

\noindent {\rm (iv)} $\W_\T(\A)$ is a proper subset of
$\B\W_\T(\A)$.\par

\noindent {\rm (v)} $\B_\T(\A)$ is a proper subset of $\B\B_\T
(\A)$.\par

\noindent {\rm (vi)} $\A^\bullet\setminus
 \T^{-1}(1)\subseteq\B\F_\T (\A)\setminus\F_\T (\A)$.\par

\noindent {\rm (vii)}If $a, \; b \in \B\F_\T (\A)$ are such that
$ab-ba \in \T^{-1}(0)$, then $ab \in \B\F_\T (\A)$.\par

\noindent {\rm (viii)} If $a \in \B\W_\T (\A)$, then $a^n \in
\B\W_\T (\A)$ for every $n \in \NN$.\par

\noindent {\rm (ix)} If $a \in \B\B_\T (\A)$, then $a^n \in \B\B_\T
(\A)$ for every $n \in \NN$.\par

\noindent {\rm (x)} $\B\W_{\T} (\A)\setminus \W_{\T} (\A)\subseteq
\B\F_{\T} (\A)\setminus \F_{\T} (\A)$.\par

\noindent {\rm (xi)}  $\B\B_{\T}(\A)\setminus \B_{\T}(\A)\subseteq
\B\F_{\T}(\A)\setminus \F_{\T} (\A)$.\par

\noindent {\rm (xii)} $\A^\KD \cap \B\F^{(1)}_{\T} (\A)\subseteq
\B\B^{(1)}_{\T} (\A)$.\par

\noindent {\rm (xiii)}  $\sigma_{\B\W_\T}(a)=\bigcap _{c\in
\T^{-1}(0)}\sigma_{\D}(a+c)$ ($a\in\A$).\par

\noindent {\rm (xiv)}  $\sigma_{\B\B_\T}(a)=\bigcap_{c \in
\T^{-1}(0), ac=ca} \sigma_\D(a+c)$ ($a\in\A$).\par

\noindent {\rm (xv)} The sets $\sigma_{\B\W_\T}(a)$ and
$\sigma_{\B\B_\T}(a)$ are closed ($a\in\A$).
\end{teor}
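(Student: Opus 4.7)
The plan is to derive (xv) as an immediate corollary of the preceding items (xiii) and (xiv) of the same theorem, combined with the standard fact that the ordinary Drazin spectrum $\sigma_\D(\cdot)$ is closed. Statement (xiii) expresses $\sigma_{\B\W_\T}(a)$ as $\bigcap_{c \in \T^{-1}(0)} \sigma_\D(a+c)$, while (xiv) expresses $\sigma_{\B\B_\T}(a)$ as $\bigcap_{c \in \T^{-1}(0),\, ac=ca} \sigma_\D(a+c)$. Both are exhibited as arbitrary intersections of closed subsets of $\CC$; since each $\sigma_\D(a+c)$ is closed (this is \cite[Proposition 2.5]{Ber1}, already invoked in the proof of Theorem \ref{teo2.2}(iii)), the conclusion is immediate. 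No new machinery is required.

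If for presentation reasons one prefers not to rely on (xiii) and (xiv), the complement of $\sigma_{\B\W_\T}(a)$ can be shown to be open directly. Fix $\lambda_0 \notin \sigma_{\B\W_\T}(a)$ and write $a - \lambda_0 = b + c$ with $b \in \A^\D$ and $c \in \T^{-1}(0)$. For $\lambda$ in a neighbourhood of $\lambda_0$ one has $a - \lambda = \bigl(b - (\lambda - \lambda_0)\bigr) + c$; since $0 \notin \sigma_\D(b)$ and $\sigma_\D(b)$ is closed, there is $\varepsilon > 0$ such that $b - \mu \in \A^\D$ whenever $|\mu| < \varepsilon$, and therefore $a - \lambda \in \A^\D + \T^{-1}(0) = \B\W_\T(\A)$ for all $\lambda$ with $|\lambda - \lambda_0| < \varepsilon$. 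The B-Browder case is handled identically: from $(a-\lambda_0)c = c(a-\lambda_0)$ one extracts $bc = cb$, and translating $b$ by a scalar preserves commutativity with $c$, so the Browder-type decomposition is stable under the perturbation.

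I do not anticipate any serious obstacle. The only fact doing real work is the closedness of the Drazin spectrum in a Banach algebra, which is standard and is already cited in the paper. The commutation condition in the B-Browder case does not create trouble, because it is automatic that $b - \mu \cdot 1$ commutes with $c$ as soon as $b$ does.
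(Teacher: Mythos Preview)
Your argument for (xv) is correct and matches the paper's approach exactly: the paper's proof of (xv) reads, in its entirety, ``Apply statements (xiii)-(xiv),'' i.e., exhibit each spectrum as an intersection of closed Drazin spectra. Your alternative direct argument in the second paragraph is also sound but goes beyond what the paper supplies.
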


\begin{proof}(i). This statement can be easily derived from the inclusions
$$
\{0\}\subseteq \B^\bullet\subseteq\B^\D.
$$

\noindent (ii). Clearly, $\T (\A^\bullet)\subseteq \B^\bullet$ and
$\B^\bullet\subseteq \B^\D$.

\noindent (iii).  Since  $\{0\}\cap\B^{-1}=\emptyset$, then
$\T^{-1}(0)\cap\F_\T(\A)=\T^{-1}(0)\cap T^{-1}(\B^{-1})= \emptyset$.
Consequently, $\T^{-1}(0)\subseteq \B\F_\T(\A)\setminus \F_\T
(\A)$.\par

\noindent (iv). Clearly, $\T^{-1}(0) \subseteq\B\W_\T (\A)$. In
addition, according to the proof of statement (iii),
$\T^{-1}(0)\cap\W_\T(\A)\subseteq
\T^{-1}(0)\cap\F_\T(\A)=\emptyset$. Therefore, $\T^{-1}(0)\subseteq
\B\W_\T(\A)\setminus \W_\T  (\A)$.\par

\noindent (v). It is clear that $\T^{-1}(0) \subseteq \B\B_\T (\A)
\setminus \B_\T (\A)$.\par

\noindent (vi). Note that $\A^\bullet\setminus\T^{-1}(1)\subseteq
\A^\bullet\subseteq \B\F_\T (\A)$. In addition, if $a\in
\A^\bullet\setminus\T^{-1}(1)$, then $\T (a)\in\B^\bullet\setminus
\B^{-1}$. In particular, $a\notin \F_\T(\A)$.\par

\noindent (vii). Apply  \cite[Proposition 2.6]{Ber1}.

\noindent (viii). Let $a \in \B\W_\T (\A)$. Then $a=b+c$, where $b
\in \A^\D$ and $c \in \T^{-1}(0)$. It will be proved that
$a^n=b^n+x_n$, where $x_n \in \T^{-1}(0)$, for every $n \in \NN$. In
fact, for $n=1$ it is obvious. Suppose that this statement  is true
for $k \in \NN$. Then,
\[a^{k+1}=a^k a=(b^k+x_k)(b+c)=b^{k+1}+(b^k c+x_k b+x_k c).\]

Clearly,  $b^k c+x_k b+x_k c \in \T^{-1}(0)$. As a result, since
$b^n\in \A^\D$, $a^n \in \B\W_\T (\A)$, $n\in\NN$.

\noindent (ix). Let $a \in \B\B_\T(\A)$. In particular, there are
$b\in \A^\D$ and $c\in \T^{-1}(0)$ such that  $a=b+c$ and $bc=cb$.
As a result, for every $n \in \NN$
\[a^n=(b+c)^n=\sum_{k=0}^{n} \binom{n}{k} b^{n-k}c^k=b^n+ \sum_{k=1}^{n} \binom{n}{k} b^{n-k}c^k.\]
Since $b^n\in \A^\D$, $z=\sum_{k=1}^{n} \binom{n}{k} b^{n-k}c^k\in
\T^{-1}(0)$ and $b^n$ commutes with $z$,  $a^n \in \B\B_\T(\A)$.\par

\noindent (x). Clearly,  $\B\W_{\T} (\A)\setminus
\W_{\T}(\A)\subseteq \B\W_{\T}(\A)\subseteq \B\F_{\T}(\A)$. If
 $a \in \B\W_\T (\A)\setminus \W_\T (\A)$, then there exist $c \in\A^\D$ and
$d \in \T^{-1}(0)$ such that $a=c+d$. In addition, according to
\cite[Proposition 1(a)]{RS}, there is $p\in \A^\bullet$ such that
\[cp=pc, \; \; \; \; \; \; c+p \in \A^{-1}, \; \; \; \; \; \; cp \hbox{ is nilpotent}.
\]
Note that since  $a=(c+p)+(d-p)$ and $a\notin W_\T (\A)=\A^{-1}+
\T^{-1}(0)$, $p\notin \T^{-1}(0)$. Let $0\neq q=\T (p)\in
\B^\bullet$. Then, $q\T (c)=\T (c)q$, $\T (c)+q\in \B^{-1}$ and $\T
(c)q$ is nilpotent. Thus, $\T (c)$ is Drazin invertible but not
invertible ($q\neq 0$), which implies that $c\notin \F_{\T}(\A)$.
However, since $d\in \T^{-1}(0)$, $a\notin \F_{\T}(\A)$.\par
\noindent (xi). Apply an argument similar to the one used in the
proof of statement (x).\par

\noindent (xii). Since $\A^{-1}\subseteq \B\B_\T^{(1)}(\A)$, suppose
that $a\in \A^\KD\setminus \A^{-1}$. According to  \cite[Theorem
6.4]{Koliha}, there is $p\in\A^\bullet$ such that $pa=ap$,
$a+p\in\A^{-1}$ and $a= x+y$, where $y=ap\in\A^{qnil}$, $x=a(1-p)$,
$xy=yx=0$ and $x\in\A^\sharp$. In addition, since $\T (a)\in
\B^\sharp$, there is $q\in \B^\bullet$ such that $q\T (a)=\T (a)q=0$
and $\T (a)+q\in\B^{-1}$ (\cite[Lemma 3]{RS}). Now well, if $q=0$, then by \cite[Theorem
2.4]{MR} $a\in\F_\T (\A)\cap\A^{\KD}\subseteq\B_\T(\A)\subseteq
\B\B^{(1)}_\T (\A)$. On the other hand, if $q\neq 0$, then according
to \cite[Theorem 3.1]{Koliha}, $\T (p)=q$ and $\T (y)=\T (a) \T
(p)=0$. Thus, $a\in \B\B^{(1)}_\T (\A)$.\par

\noindent (xiii)-(xiv). These statements can be easily deduced.\par

\noindent (xv). Apply statements (xiii)-(xiv).
\end{proof}
\noindent The following theorem summarizes the basic properties of
generalized B-Fredholm elements.

\begin{teor} \label{4444}Let $\A$ and $\B$ be two  unital Banach algebras and consider a  (not necessarily continuous)
homomorphism $\T\colon \A\to\B$. Then, the following statements
hold.\par \noindent {\rm (i)} $\T^{-1}(\B^{qnil} \setminus \B^{nil})
\subseteq \mathcal{GBF}_\T(\A) \setminus \mathcal{BF}_\T(\A)$.

\noindent {\rm (ii)}If $a, \; b \in \mathcal{GBF}_\T (\A)$ are such
that $ab-ba \in \T^{-1}(0)$, then $ab \in \mathcal{GBF}_\T
(\A)$.\par

\noindent {\rm (iii)} If $a \in \mathcal{GBW}_\T (\A)$, then $a^n
\in \mathcal{GBW}_\T (\A)$ for every $n \in \NN$.\par

\noindent {\rm (iv)} If $a \in \mathcal{GBB}_\T (\A)$, then $a^n \in
\mathcal{GBB}_\T (\A)$ for every $n \in \NN$.\par

\noindent {\rm (v)} $\mathcal{GBW}_{\T} (\A)\setminus \W_{\T}
(\A)\subseteq \mathcal{GBF}_{\T} (\A)\setminus \F_{\T} (\A)$.\par

\noindent {\rm (vi)}  $\mathcal{GBB}_{\T}(\A)\setminus
\B_{\T}(\A)\subseteq \mathcal{GBF}_{\T}(\A)\setminus \F_{\T}
(\A)$.\par

\noindent {\rm (vii)}  $\sigma_{\mathcal{GBW}_\T}(a)=\bigcap _{c\in
\T^{-1}(0)}\sigma_{\KD}(a+c)$ ($a\in\A$).\par

\noindent {\rm (viii)}  $\sigma_{\mathcal{GBB}_\T}(a)=\bigcap_{c \in
\T^{-1}(0), ac=ca} \sigma_\KD(a+c)$ ($a\in\A$).\par

\noindent {\rm (ix)} The sets $\sigma_{\mathcal{GBW}_\T}(a)$ and
$\sigma_{\mathcal{GBB}_\T}(a)$ are closed ($a\in\A$).
\end{teor}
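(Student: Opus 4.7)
The plan is to mirror, step by step, the corresponding items of Theorem \ref{inkluzije}, replacing the Drazin framework by the Koliha--Drazin one; the main tool will be that quasinilpotents are Koliha--Drazin invertible (with inverse $0$) and the spectral idempotent characterization in \cite[Theorem 4.2]{Koliha}. All statements should drop out of routine adaptations once a few key substitutions are made.

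For (i), I would note that if $\T(a)\in\B^{qnil}$, then $\T(a)\in\B^{\KD}$ (take $b=0$ in the definition of Koliha--Drazin inverse), so $a\in\mathcal{GBF}_{\T}(\A)$. On the other hand, a quasinilpotent Drazin invertible element is nilpotent, so if $\T(a)\notin\B^{nil}$, then $\T(a)\notin\B^{\D}$, giving $a\notin\mathcal{BF}_{\T}(\A)$. For (ii), apply the homomorphism to get $\T(a),\T(b)\in\B^{\KD}$ with $\T(a)\T(b)=\T(b)\T(a)$; the product of two commuting Koliha--Drazin invertible elements is Koliha--Drazin invertible (this follows at once from regularity condition (ii), since $\B^{\KD}$ is a regularity by \cite[Theorem 1.2]{Lu}, or directly by multiplying the spectral idempotents).

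For (iii) and (iv), I would copy verbatim the inductive arguments from Theorem \ref{inkluzije}(viii) and (ix), using now $b\in\A^{\KD}$ in place of $b\in\A^{\D}$; closure of $\A^{\KD}$ under powers is again a regularity property. For (v) and (vi), the argument of Theorem \ref{inkluzije}(x)--(xi) transfers once one replaces the appeal to \cite[Proposition 1(a)]{RS} by the appeal to \cite[Theorem 4.2]{Koliha}: if $a=c+d$ with $c\in\A^{\KD}$ and $d\in\T^{-1}(0)$ but $a\notin\W_{\T}(\A)$, then the spectral idempotent $p$ associated with $c$ must satisfy $p\notin\T^{-1}(0)$, so $q=\T(p)$ is a nonzero idempotent with $\T(c)+q\in\B^{-1}$ and $\T(c)q\in\B^{qnil}$; this forces $\T(c)\notin\B^{-1}$ (otherwise $q$ would be both idempotent and quasinilpotent, hence zero), and since $\T(d)=0$, also $\T(a)\notin\B^{-1}$, i.e.\ $a\notin\F_{\T}(\A)$. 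The Browder analogue (vi) is identical with $cp=pc$ preserved throughout.

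For (vii) and (viii), I would unwind the definitions: $\lambda\notin\sigma_{\mathcal{GBW}_{\T}}(a)$ iff there exists $c\in\T^{-1}(0)$ with $a-\lambda+c\in\A^{\KD}$, i.e.\ $\lambda\notin\sigma_{\KD}(a+c)$ for some $c\in\T^{-1}(0)$, which is exactly the complement of the claimed intersection; the B-Browder case is identical modulo imposing the commutativity constraint $ac=ca$. Finally, (ix) is immediate from (vii) and (viii): each $\sigma_{\KD}(a+c)$ is closed by \cite[Proposition 1.5(ii)]{Lu}, and an intersection of closed sets is closed. I do not foresee a real obstacle here; the only point requiring some care is the equivalence, in (i), between being quasinilpotent--and--Drazin invertible and being nilpotent, and in (v)--(vi) the observation that an idempotent quasinilpotent must vanish, which is what converts the spectral idempotent argument from the Drazin to the Koliha--Drazin setting.
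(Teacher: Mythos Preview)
Your proposal is correct and follows essentially the same route as the paper: reduce each item to its Drazin counterpart in Theorem \ref{inkluzije} with the obvious Koliha--Drazin substitutions, and invoke the spectral idempotent characterization from \cite{Koliha} in place of \cite[Proposition 1(a)]{RS}. One small correction: in (ii) your appeal to regularity condition (ii) does not work as stated, since that axiom requires a B\'ezout relation $ac+bd=1$ among four mutually commuting elements, not merely $ab=ba$; the paper instead cites \cite[Theorem 5.5]{Koliha} directly for the fact that commuting Koliha--Drazin invertibles have Koliha--Drazin invertible product, which is what your parenthetical ``or directly by multiplying the spectral idempotents'' is gesturing at. Your argument in (v)--(vi), deducing $\T(c)\notin\B^{-1}$ from the observation that a nonzero idempotent cannot be quasinilpotent, is a clean variant of the paper's reasoning (which appeals to \cite[Theorem 3.1]{Koliha} for the same conclusion via uniqueness of the spectral idempotent).
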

\begin{proof} \noindent (i). It follows from the fact that $\B^{qnil} \setminus \B^{nil}
\subseteq \B^\KD \setminus \B^\D$. \par
\noindent (ii). Apply \cite[Theorem 5.5]{Koliha}.\par
\noindent (iii)-(iv). These statements can be proved using an argument similar to the one in
Theorem \ref{inkluzije}(viii) and Theorem \ref{inkluzije}(ix), respectively.\par
\noindent (v)-(vi). Apply an argument similar to the one in Theorem \ref{inkluzije}(x)-(xi),
using \cite[Theorem 3.1]{Koliha} instead \cite[Proposition 1(a)]{RS}.\par
\noindent (vii)-(viii). These statements can be easily deduced.\par
\noindent (ix). Apply statements (vii) and (viii).
\end{proof}

\section{Perturbations of B-Fredholm elements}

\indent To prove the main results of this section, some preparation
is needed.\par

\indent Given  $S \subseteq \A$, \it the commuting perturbation class of $S$, \rm
is the set
\[P_{comm}(S)=\{a \in \A: S +_{comm} \{a\} \subset S\}, \]
where, if $H,K \subseteq \A$
\[H+_{comm} K=\{c+d: (c,d) \in H \times K, cd=dc\}.\]

\bigskip

\indent Let $\A$ and $\B$ be two unital Banach algebras and consider
the homomorphism $\T\colon \A \to \B$. Given $K\subseteq \B$, it is not difficult to prove that
\begin{equation}\label{gen}
\T^{-1}(P_{comm}(K))\subseteq P_{comm}(\T^{-1}(K)).
\end{equation}
In particular, $\T^{-1}(P_{comm}(\B^\D))\subseteq
P_{comm}(\T^{-1}(\B^\D))=P_{comm} (\B\F_\T (\A))$. Moreover, it is
clear that $\T^{-1} (\Poly^{-1}(\{0\})) = \Poly^{-1}(\T^{-1}(0))$.
Also, if $K_1, \, K_2 \subseteq \A$ are such that $K_1 \cap K_2 \neq
\emptyset$ then
\begin{equation}\label{presek}
P_{comm}(K_1) \cap P_{comm}(K_2) \subseteq P_{comm}(K_1 \cap K_2).
\end{equation}

\indent On the other hand, it is well known that $b \in \A^{qnil}$ if and only if for
every $a \in \A$ which commutes with $b$ there is the equivalence:
\begin{equation}\label{eekv1}
a \in \A^{-1} \Longleftrightarrow a+b \in \A^{-1},
\end{equation}
that is,
\begin{equation}\label{eekv2}
a \notin \A^{-1} \Longleftrightarrow a+b \notin \A^{-1}.
\end{equation}
The equivalences \eqref{eekv1} and \eqref{eekv2} hold also if
$\A^{-1}$ is replaced by $\A^{-1}_{\left}$ or $\A^{-1}_{\right}$.
Consequently,
\begin{equation*}
\A^{qnil}=P_{comm}( \A^{-1})=P_{comm}( \A^{-1}_{\left})=P_{comm}(
\A^{-1}_{\right}),
\end{equation*}
 and also,
\begin{equation}\label{complement}
\A^{qnil}=P_{comm}(\A \setminus \A^{-1})=P_{comm}(\A \setminus
\A^{-1}_{\left})=P_{comm}(\A \setminus \A^{-1}_{\right}).
\end{equation}

\indent In first place, a preliminary result is considered.\par

\begin{prop} \label{algebraic}
Let $\A$ be a unital Banach algebra and consider an algeraic element $a \in \A$. Then,
$a\in\A^{qnil}$ if and only if $a$ is nilpotent.
\end{prop}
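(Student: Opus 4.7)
The converse implication is immediate since every nilpotent element is quasinilpotent, so the content of the proposition is in the direct implication: an algebraic quasinilpotent element is nilpotent. My plan is to combine the polynomial identity coming from algebraicity with the spectral mapping theorem and the fact that a quasinilpotent element has spectrum $\{0\}$.

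Concretely, since $a$ is algebraic, there exists a nonzero polynomial $p \in \CC[X]$ with $p(a)=0$. Factor out the largest power of $X$ dividing $p$, writing
\[
p(X) = X^k\, q(X), \qquad q(0)\neq 0,
\]
for some integer $k\geq 0$ and some $q\in\CC[X]$. Applying the polynomial spectral mapping theorem and using that $\sigma(a)=\{0\}$ (because $a\in\A^{qnil}$), I get
\[
\sigma(q(a)) = q(\sigma(a)) = q(\{0\}) = \{q(0)\},
\]
which does not contain $0$. Hence $q(a)\in\A^{-1}$. From $0 = p(a) = a^k q(a)$ and the invertibility of $q(a)$, I conclude $a^k = 0$, so $a$ is nilpotent.

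The only subtlety is the edge case $k=0$: then $p=q$ with $q(0)\neq 0$, so the above argument gives both $q(a)$ invertible and $q(a)=p(a)=0$, which forces $1=0$ in $\A$ and contradicts the assumption that $\A$ is a unital Banach algebra (with $1\neq 0$). One can therefore dispose of this case at the outset, or simply observe that if $a\neq 0$ then $\sigma(a)=\{0\}$ is nonempty and the spectral mapping theorem applied to $p(a)=0$ already forces $0$ to be a root of $p$, i.e.\ $k\geq 1$; and if $a=0$ the conclusion is trivial. I expect no real obstacle; the main task is just to present the factorization and spectral-mapping step cleanly.
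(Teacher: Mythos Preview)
Your proof is correct and follows essentially the same idea as the paper's. The only cosmetic difference is that the paper works with the minimal polynomial $P$ of $a$ and invokes the well-known fact that $\sigma(a)=P^{-1}(\{0\})$, from which $\sigma(a)=\{0\}$ forces $P(X)=X^k$; you instead take an arbitrary annihilating polynomial, factor it as $X^k q(X)$ with $q(0)\neq 0$, and use the spectral mapping theorem directly to see that $q(a)$ is invertible, hence $a^k=0$. Your route is marginally more self-contained (it does not quote the minimal-polynomial/spectrum identity), while the paper's is a line shorter; both amount to the same spectral-mapping argument.
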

\begin{proof}
Every nilpotent element is quasinilpotent. On the other hand, if $a\in\A^{qnil}$, then let $P\in \CC [X]$ be the minimal
polynomial such that $P(a)=0$. It is well known that
$\sigma(a)=P^{-1}(\{0\})$. Since $\sigma(a)=\{0\}$, there must exist
$k\in\NN$ such that $P(X)=X^k$. Consequently, $a$ is nilpotent.
\end{proof}

\indent In the following theorem the commuting perturbation class of
$\A^\D$ and $\A^\KD$ will be considered.\par

\begin{teor} \label{inkluzije1}
Let $\A$ be a unital Banach algebra. Then,
\par

\noindent {\rm (i)} $\A^{nil} \subseteq P_{comm}(\A^\D) \subseteq
\Poly^{-1}(\{0\}).$ \par

\noindent {\rm (ii)} $\A^{qnil} \subseteq P_{comm}(\A^{\KD}).$
\par

\noindent {\rm (iii)} $\A^{nil} \subseteq P_{comm}(\A^\D \setminus
\A^{-1}).$ \par

\noindent {\rm (iv)} $\A^{qnil} \subseteq P_{comm}(\A^\KD
\setminus \A^{-1}).$ \par
\end{teor}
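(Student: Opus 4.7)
Statements (iii) and (iv) are quick consequences of (i) and (ii): once $a + n$ is shown to lie in $\A^\D$ (resp.\ $\A^\KD$), equation \eqref{eekv2} together with the inclusion $\A^{nil} \subseteq \A^{qnil}$ keeps $a + n$ outside $\A^{-1}$ whenever $a$ itself is. So the substance lies in (i) and (ii), which I would attack via the spectral decomposition of Drazin-type elements.

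For the first inclusion of (i), I would invoke \cite[Proposition 1(a)]{RS}: for $a \in \A^\D$ there exists $p \in \A^\bullet$ with $pa = ap$, $a + p \in \A^{-1}$, and $ap$ nilpotent. Because $p$ is a holomorphic function of $a$ at the isolated spectral point $0$, every element commuting with $a$ also commutes with $p$. Thus, given $n \in \A^{nil}$ commuting with $a$, the identity $(a+n) + p = (a+p) + n$ remains invertible by \eqref{eekv1}, and $(a+n)p = ap + np$ is a sum of commuting nilpotents, hence nilpotent, so $a+n$ has a Drazin decomposition. For the reverse inclusion $P_{comm}(\A^\D) \subseteq \Poly^{-1}(\{0\})$, observe that $-\lambda \cdot 1 \in \A^\D$ for every $\lambda \in \CC$ (with $0 \in \A^\D$ via the trivial Drazin inverse) and commutes with every element of $\A$; so $a \in P_{comm}(\A^\D)$ forces $a - \lambda \in \A^\D$ for all $\lambda \in \CC$, i.e.\ $\sigma_\D(a) = \emptyset$. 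Theorem \ref{teo2.2}(iv) applied to the identity homomorphism $\A \to \A$ then delivers $a \in \Poly^{-1}(\{0\})$.

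Statement (ii) proceeds by the same decomposition scheme, using Koliha's characterization \cite[Theorem 4.2]{Koliha} in place of \cite[Proposition 1(a)]{RS}; the only extra verification is that for $w \in \A^{qnil}$ commuting with the idempotent $p$, the product $wp$ is quasinilpotent, which follows from $\sigma(wp) \subseteq \sigma(w)\sigma(p) \subseteq \{0\}$ for commuting elements (passing to a maximal commutative subalgebra and applying Gelfand theory). The main delicate point throughout is the holomorphic-calculus fact that the spectral idempotent $p$ commutes with everything commuting with $a$; modulo this standard observation, the argument is essentially algebraic.
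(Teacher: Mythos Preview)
Your argument is correct, and for parts (iii)--(iv) and the second inclusion of (i) it coincides with the paper's proof (the paper uses \eqref{presek} and \eqref{complement} for (iii)--(iv), which is exactly your reasoning phrased set-theoretically, and likewise cites \cite[Theorem 2.1]{Boasso} for $\sigma_\D(b)=\emptyset \Rightarrow b$ algebraic).

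The genuine difference is in the forward inclusions $\A^{nil}\subseteq P_{comm}(\A^\D)$ and $\A^{qnil}\subseteq P_{comm}(\A^\KD)$. The paper outsources these to the additive theory of Drazin inverses, invoking \cite[Theorem 3]{Dragana} and \cite[Theorem 8]{Dragana}: if $a,b$ are commuting (Koliha-)Drazin invertible elements, the sum $a+b$ is again (Koliha-)Drazin invertible under suitable hypotheses, and one then notes that a (quasi)nilpotent $b$ has $b^d=0$ (resp.\ $b^D=0$). Your route is instead self-contained: you rebuild the spectral decomposition of $a+n$ directly from that of $a$, using that the spectral idempotent $p=1-aa^D$ lies in the bicommutant of $a$. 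This trades a black-box citation for an explicit construction; it is arguably more transparent here, since the full strength of the additive results in \cite{Dragana} is not needed. One small remark on your treatment of (ii): the ``extra verification'' is not only that $np$ is quasinilpotent, but also that the sum $ap+np$ of two \emph{commuting} quasinilpotents is quasinilpotent---this follows from the same Gelfand/commutative-subalgebra argument you invoke, but is worth stating.
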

\begin{proof}
{\rm (i)}. Let $b \in \A^{nil}$ and $a \in \A^\D$ such that
$ab=ba$. Since $b\in \A^\D$  and $b^d=0$, according to
\cite[Theorem 3]{Dragana}, $a+b \in \A^\D$. In order to prove
the remaining inclusion, suppose that $b \in P_{comm}(\A^\D)$. The
elements $\lambda 1 (=\lambda)$ are Drazin invertible and commute
with $b$ for every $\lambda \in \CC$. Therefore,
$b+\lambda\in\A^\D$ for every $\lambda \in \CC$. In particular,
$\sigma_{\D}(b)=\emptyset$. Therefore, according to \cite[Theorem 2.1]{Boasso},
$b$ is algebraic. \par

\noindent {\rm (ii)}. It follows from \cite[Theorem 8]{Dragana} and
from the fact that $b^D=0$ if $b \in \A^{qnil}$. \par

\noindent {\rm (iii)}. Since $\A^{nil} \subseteq P_{comm}(\A^\D)$
and $\A^{nil} \subseteq \A^{qnil}=P_{comm}(\A
\setminus \A^{-1})$ (identity  \eqref{complement}), apply \eqref{presek}
to obtain $\A^{nil} \subseteq P_{comm}(\A^\D \cap (\A \setminus
\A^{-1}))=P_{comm}(\A^\D \setminus \A^{-1}).$
\par

\noindent {\rm (iv)}. It follows from {\rm (ii)}, \eqref{complement}
and \eqref{presek}.
\end{proof}

\begin{corollary}\label{druga}
Let $\A$ be a unital Banach algebra. Then
\begin{equation}\label{druga-1}
\Poly^{-1}(\{0\})\cap \A^{qnil}\subseteq P_{comm}(\A^\D).
\end{equation}
\end{corollary}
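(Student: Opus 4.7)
My plan is to observe that the intersection on the left-hand side collapses to $\A^{nil}$, and then to invoke the nilpotent-perturbation inclusion already proved in Theorem \ref{inkluzije1}(i).

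More precisely, I would take an arbitrary $a\in\Poly^{-1}(\{0\})\cap \A^{qnil}$. By definition, $a$ is simultaneously algebraic and quasinilpotent. Proposition \ref{algebraic} says exactly that an algebraic element is quasinilpotent if and only if it is nilpotent, so $a\in\A^{nil}$. This gives the set equality $\Poly^{-1}(\{0\})\cap \A^{qnil}=\A^{nil}$ (the reverse inclusion being trivial, since every nilpotent element is both algebraic and quasinilpotent).

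Then I would close the argument by applying Theorem \ref{inkluzije1}(i), which already establishes $\A^{nil}\subseteq P_{comm}(\A^\D)$. Combining these two facts yields $\Poly^{-1}(\{0\})\cap \A^{qnil}=\A^{nil}\subseteq P_{comm}(\A^\D)$, as required.

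There is no serious obstacle here: the corollary is really just a reformulation of Theorem \ref{inkluzije1}(i) once one invokes the characterization of algebraic quasinilpotents as nilpotents given by Proposition \ref{algebraic}. The only thing to watch is that both ingredients are stated in the same algebra $\A$ with no extra hypothesis, so the chain of inclusions goes through verbatim.
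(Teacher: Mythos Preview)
Your proof is correct and matches the paper's approach exactly: the paper's proof consists of the single line ``Apply Proposition \ref{algebraic} and Theorem \ref{inkluzije1}(i),'' which is precisely the two-step argument you spelled out.
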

\begin{proof}
Apply Proposition \ref{algebraic} and Theorem \ref{inkluzije1}
(i).
\end{proof}

\begin{rem}\label{remark444}\rm Let $\A$ be a unital Banach algebra.Then, the following statements hold.\par
\noindent (i) $\A^{\K\D}\setminus\A^{-1}_{\left}=\A^{\K\D}\setminus\A^{-1}_{\right}=\A^{\K\D}\setminus\A^{-1}$.\par
\noindent (ii) $\A^\D\setminus\A^{-1}_{\left}=\A^\D\setminus\A^{-1}_{\right}=\A^\D\setminus\A^{-1}$.\par
\noindent (iii) $\A^{\K\D}\cap\A^{-1}_{\left}=\A^{\K\D}\cap\A^{-1}_{\right}=\A^{-1}$.\par
\noindent (iv) $\A^\D \cap\A^{-1}_{\left}=\A^\D\cap\A^{-1}_{\right}=\A^{-1}$.\par

\indent To prove statement (i), note that $\A^{\K\D} \setminus\A^{-1}_{\left}\subseteq \A^{\K\D}
\setminus\A^{-1}$ and $\A^{\K\D}
\setminus\A^{-1}_{\right}\subseteq \A^{\K\D} \setminus\A^{-1}$. Now suppose that $a\in \A^{\K\D}\setminus
\A^{-1}$. Then $0\in\iso\, \sigma(a)$ (\cite[Theorem
4.2]{Koliha}). It follows that $0$ is a boundary point of $\sigma
(a)$ and hence, it belongs to the left and also to the right
spectrum of $a$. This implies that $a\notin\A^{-1}_{\left}$ and
$a\notin\A^{-1}_{\right}$. In particular, $a\in \A^{\K\D}
\setminus\A^{-1}_{\left}\cap \A^{\K\D}
\setminus\A^{-1}_{\right}$. Therefore, $\A^{\K\D}\setminus
\A^{-1}\subseteq \A^{\K\D} \setminus\A^{-1}_{\left}\cap  \A^{\K\D}
\setminus\A^{-1}_{\right}$.\par
\indent Statement (ii) can be proved using an argument simimilar to the one developed in
the previous paragraph.\par
\indent Statement (iii) (respectively statement (iv)) can be easily derived from
statement (i) (respectively statement (ii)).
\end{rem}

\indent Next algebraic (nilpotent) elements will be characterized using the Drazin spectrum.\par

\begin{teor}\label{ekvi} Let $\A$  be a unital Banach algebra and consider $d\in \A^{qnil}$. Then the following statements are
equivalent:\par
\noindent {\rm (i)} The element $d$ is algebraic.\par
\noindent {\rm (ii)} Given $a\in\A$,  $ad=da$ implies that $\sigma_\D(a+d)=\sigma_\D(a)$.
\end{teor}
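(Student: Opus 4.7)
The plan is to prove both implications using Proposition \ref{algebraic}, the inclusion $\A^{nil} \subseteq P_{comm}(\A^\D)$ from Theorem \ref{inkluzije1}(i), and the characterization of algebraic elements via the Drazin spectrum from \cite[Theorem 2.1]{Boasso} (already invoked in the proof of Theorem \ref{inkluzije1}(i)).

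For the implication (i) $\Rightarrow$ (ii), I would first apply Proposition \ref{algebraic} to deduce that the algebraic quasinilpotent element $d$ is in fact nilpotent, so $d \in \A^{nil}$. Then, given $a \in \A$ with $ad = da$ and any $\lambda \in \CC$, the shifted element $a - \lambda$ still commutes with $d$, and Theorem \ref{inkluzije1}(i) yields that $a - \lambda \in \A^\D$ implies $(a - \lambda) + d = (a + d) - \lambda \in \A^\D$. Hence $\sigma_\D(a + d) \subseteq \sigma_\D(a)$. For the reverse inclusion, I would apply the same inclusion, this time to the element $(a + d) - \lambda$ together with the nilpotent $-d$, which commutes with $a + d$ (since $d$ commutes with $a$): $(a + d) - \lambda \in \A^\D$ then forces $((a + d) - \lambda) + (-d) = a - \lambda \in \A^\D$, giving $\sigma_\D(a) \subseteq \sigma_\D(a + d)$.

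For the implication (ii) $\Rightarrow$ (i), I would specialize the hypothesis to $a = 0$, which trivially commutes with $d$. This yields $\sigma_\D(d) = \sigma_\D(0) = \emptyset$ (the latter since $0$ is Drazin invertible and every non-zero scalar is invertible), whence \cite[Theorem 2.1]{Boasso} forces $d$ to be algebraic. The only step that requires any real care is the perturbation-back trick with $-d$ in the first direction; once the symmetric role played by $d$ and $-d$ is observed, the result falls out immediately from the tools already assembled.
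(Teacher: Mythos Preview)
Your proof is correct and follows essentially the same route as the paper: Proposition~\ref{algebraic} to pass from quasinilpotent algebraic to nilpotent, the inclusion $\A^{nil}\subseteq P_{comm}(\A^\D)$ applied symmetrically to $d$ and $-d$ for the first implication, and the specialization $a=0$ together with \cite[Theorem 2.1]{Boasso} for the second. The only cosmetic difference is that the paper quotes Corollary~\ref{druga} (which is itself an immediate consequence of Proposition~\ref{algebraic} and Theorem~\ref{inkluzije1}(i)) rather than Theorem~\ref{inkluzije1}(i) directly.
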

\begin{proof} If $d$ is algebraic, then according to Proposition  \ref{algebraic}, $d\in \A^{nil}$, which is equivalent to
$-d\in \A^{nil}$. According to Corollary \ref{druga}, $d,-d\in
P_{comm}(\A^\D)$. Let $a\in \A$ such that $ad=da$. If $\lambda\in
\CC$ is such that $\lambda\notin\sigma_\D (a)$, then
$a-\lambda\in\A^\D$, and since $d\in P_{comm}(\A^\D)$,
$a+d-\lambda\in\A^\D$. In particular, $\lambda\notin\sigma_\D(a+d)$.
To prove the reverse, apply the same argument to $-d\in
P_{comm}(\A^\D)$, $a+d$ and $\lambda\notin\sigma_\D (a+d)$.\par

\indent Conversely, if $a=0$, then
$\sigma_\D(d)=\sigma_\D(0)=\emptyset$. However, according to
\cite[Theorem 2.1]{Boasso}, $d$ is algebraic.
\end{proof}

\indent In the following theorem the commuting perturbation class of (generalized)
B-Fredholm elements will be considered.\par

\begin{teor}Let $\A$ and $\B$ be two  unital Banach algebras and consider
a (not necessarily continuous) homomorphism  $\T: \A \to \B$. Then,
\par
\noindent  {\rm (i)} $\N_\T (\A) \subseteq
P_{comm}(\B\F_\T (\A))\subseteq T^{-1} (\Poly^{-1}(\{0\}).$ \par

\noindent  {\rm (ii)} $\R_\T (\A) \subseteq P_{comm}(\mathcal{GBF}_\T
(\A)).$
\par

\noindent  {\rm (iii)} $\N_\T (\A) \subseteq P_{comm}(\B\F_\T (\A)
\setminus \F_\T(\A)).$
\par

\noindent  {\rm (iv)} $\R_\T (\A) \subseteq P_{comm}(\mathcal{GBF}_\T
(\A) \setminus \F_\T(\A)).$
\par
\end{teor}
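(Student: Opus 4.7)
The plan is to deduce all four inclusions from Theorem~\ref{inkluzije1} combined with the transfer relation \eqref{gen} and the preimage identities $\N_\T(\A)=\T^{-1}(\B^{nil})$, $\R_\T(\A)=\T^{-1}(\B^{qnil})$, $\B\F_\T(\A)=\T^{-1}(\B^\D)$, $\mathcal{GBF}_\T(\A)=\T^{-1}(\B^\KD)$, together with the elementary set-theoretic fact that $\T^{-1}(S\setminus U)=\T^{-1}(S)\setminus\T^{-1}(U)$, which yields
$$\B\F_\T(\A)\setminus\F_\T(\A)=\T^{-1}(\B^\D\setminus\B^{-1}),\qquad \mathcal{GBF}_\T(\A)\setminus\F_\T(\A)=\T^{-1}(\B^\KD\setminus\B^{-1}).$$

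For the forward inclusion in (i), I would apply Theorem~\ref{inkluzije1}(i) to $\B$ to obtain $\B^{nil}\subseteq P_{comm}(\B^\D)$, take $\T$-preimages, and invoke \eqref{gen} to produce the chain
$$\N_\T(\A)=\T^{-1}(\B^{nil})\subseteq\T^{-1}(P_{comm}(\B^\D))\subseteq P_{comm}(\T^{-1}(\B^\D))=P_{comm}(\B\F_\T(\A)).$$
The same three-step argument, with parts (ii), (iii) and (iv) of Theorem~\ref{inkluzije1} in place of part (i), proves the forward inclusions in (ii), (iii) and (iv) respectively; in (iii) and (iv) one uses the preimage identities for the set-differences displayed above.

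I expect the main obstacle to be the remaining inclusion in (i), $P_{comm}(\B\F_\T(\A))\subseteq\T^{-1}(\Poly^{-1}(\{0\}))$. Here I plan to mimic the second half of the proof of Theorem~\ref{inkluzije1}(i). Given $a\in P_{comm}(\B\F_\T(\A))$, for each $\lambda\in\CC$ the scalar $\lambda\cdot 1_\A$ lies in $\B\F_\T(\A)$ (since $\T(\lambda\cdot 1_\A)=\lambda\in\B^\D$) and commutes with $a$, so $a-\lambda\in\B\F_\T(\A)$, which is to say $\T(a)-\lambda\in\B^\D$. Therefore $\sigma_\D(\T(a))=\emptyset$, and \cite[Theorem~2.1]{Boasso} then forces $\T(a)$ to be algebraic, i.e.\ $a\in\T^{-1}(\Poly^{-1}(\{0\}))$.

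Once this $\lambda$-trick is in place, no further difficulty is expected: everything on the $\A$-side is obtained by pulling the corresponding $\B$-level statements of Theorem~\ref{inkluzije1} back along $\T$ via \eqref{gen} and the preimage identities above.
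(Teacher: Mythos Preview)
Your proposal is correct and follows essentially the same approach as the paper: both use Theorem~\ref{inkluzije1} together with the transfer relation~\eqref{gen} for the forward inclusions, and both use the $\lambda$-trick with \cite[Theorem~2.1]{Boasso} for the second inclusion in (i). Your treatment is slightly more explicit in spelling out the preimage identities needed for (iii) and (iv), but the argument is the same.
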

\begin{proof} {\rm (i)}. According to Theorem \ref{inkluzije1} {\rm (i)} and \eqref{gen},
$$
\N_\T (\A)\subseteq T^{-1}(P_{comm}(\B^\D))\subseteq
P_{comm}(T^{-1}(\B^\D))= P_{comm}(\B\F_\T (\A)).
$$
Let $a\in P_{comm}(\B\F_\T (\A))$. Then for every $\lambda\in\CC$,
$a+\lambda\in \B\F_\T (\A)$, equivalently, $T(a)+\lambda \in\B^\D$.
However, according to \cite[Theorem 2.1]{Boasso}, $T(a)\in\B$ is algebraic.\par

\noindent {\rm (ii)}-{\rm (iv)}. Apply Theorem \ref{inkluzije1}(ii)-(iv) and use an argument similar to the one
in the proof of statement (i).
\end{proof}

\begin{corollary}  \label{pposledica}Let $\A$ and $\B$ be two  unital Banach algebras and consider
a (not necessarily continuous) homomorphism  $\T: \A \to \B$. Let $a \in \B\F_\T (\A)$ and $b
\in \N_\T (\A)$ such that $ab-ba \in \T^{-1}(0)$. Then,
$a+b\in\B\F_\T (\A)$. In particular,
$\sigma_{\B\F_\T}(a+b)=\sigma_{\B\F_\T}(a)$.
\end{corollary}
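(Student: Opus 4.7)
The plan is to push everything through the homomorphism $\T$ and reduce the statement to the corresponding commuting-perturbation result inside $\B$. The key observation that makes this work is that the hypothesis $ab-ba \in \T^{-1}(0)$ is exactly saying $\T(a)\T(b) = \T(b)\T(a)$, so although we only have commutativity modulo $\T^{-1}(0)$ in $\A$, we recover honest commutativity in $\B$.

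First I would record the three facts obtained by applying $\T$: (a) $\T(a) \in \B^\D$ (since $a \in \B\F_\T(\A)$), (b) $\T(b) \in \B^{nil}$ (since $b \in \N_\T(\A)$), and (c) $\T(a)$ and $\T(b)$ commute in $\B$. Then the inclusion $\B^{nil} \subseteq P_{comm}(\B^\D)$ from Theorem \ref{inkluzije1}(i), applied in $\B$, yields $\T(a+b) = \T(a)+\T(b) \in \B^\D$, which is exactly $a+b \in \B\F_\T(\A)$.

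For the spectral equality, I would show $\sigma_\D(\T(a+b)) = \sigma_\D(\T(a))$, which by Definition \ref{def4}(i) is the desired identity $\sigma_{\B\F_\T}(a+b) = \sigma_{\B\F_\T}(a)$. Since $\T(b)$ is nilpotent it is both algebraic and an element of $\B^{qnil}$, and it still commutes with $\T(a)$. Applying Theorem \ref{ekvi} inside $\B$ with $d = \T(b)$ and with the commuting element $\T(a)$ gives $\sigma_\D(\T(a)+\T(b)) = \sigma_\D(\T(a))$, completing the proof.

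There is no real obstacle: the only thing to be mildly careful about is the distinction between $ab = ba$ and $ab - ba \in \T^{-1}(0)$, but this distinction disappears upon passing to $\B$, which is precisely why the reduction is clean. In effect the corollary is the image, under the homomorphism $\T$, of the commuting perturbation statement $\B^{nil} \subseteq P_{comm}(\B^\D)$ combined with the spectral invariance of Theorem \ref{ekvi}.
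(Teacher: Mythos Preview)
Your proof is correct and follows essentially the same route as the paper, which simply cites Theorem \ref{inkluzije1}(i): you push through $\T$ to reduce to $\B^{nil} \subseteq P_{comm}(\B^\D)$ in $\B$. Your explicit invocation of Theorem \ref{ekvi} for the spectral equality is a slight elaboration, but since Theorem \ref{ekvi} is itself an immediate consequence of Theorem \ref{inkluzije1}(i) (via Corollary \ref{druga}), the two arguments are in substance the same.
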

\begin{proof}
Apply Theorem \ref{inkluzije1} (i).
\end{proof}

\begin{corollary}\label{prethodna} Let $\A$ and $\B$ be two  unital Banach algebras and consider
a (not necessarily continuous) homomorphism $\T: \A \to \B$. If $a\in\R_\T (\A)$ and $\T (a)\in\B$ is algebraic, then $a\in
P_{comm}(\B\F_\T (\A))$.
\end{corollary}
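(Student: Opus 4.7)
The plan is to reduce the claim to the inclusion $\N_\T(\A) \subseteq P_{comm}(\B\F_\T(\A))$ proved in part (i) of the preceding theorem. The only real work is to recognize that the hypotheses force $a$ to be a $\T$-nilpotent element rather than merely a Riesz element.

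First I would unpack the hypotheses. The condition $a \in \R_\T(\A)$ means exactly that $\T(a) \in \B^{qnil}$, while the assumption that $\T(a)$ is algebraic gives a nonzero polynomial $p \in \CC[X]$ with $p(\T(a)) = 0$. I am now working entirely inside $\B$, so Proposition \ref{algebraic} applies: an algebraic quasinilpotent element of a unital Banach algebra is nilpotent. Therefore $\T(a) \in \B^{nil}$, which is precisely the definition of $a \in \N_\T(\A)$.

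Once $a$ is known to be $\T$-nilpotent, I would simply invoke the inclusion $\N_\T(\A) \subseteq P_{comm}(\B\F_\T(\A))$ from part (i) of the preceding theorem to conclude that $a \in P_{comm}(\B\F_\T(\A))$, which is exactly the statement to be proved.

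There is essentially no obstacle here: the whole content of the corollary is the observation that, under the extra algebraicity hypothesis on $\T(a)$, a Riesz element of $\A$ relative to $\T$ is automatically $\T$-nilpotent, after which part (i) of the previous theorem does the rest. The proof should therefore consist of one short paragraph citing Proposition \ref{algebraic} and the preceding theorem.
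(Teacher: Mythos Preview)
Your proof is correct and is essentially the same as the paper's: both arguments hinge on Proposition~\ref{algebraic} to turn the algebraic quasinilpotent $\T(a)$ into a nilpotent, and then invoke the already-established perturbation inclusions. The only cosmetic difference is that you pass through $a\in\N_\T(\A)$ and cite part~(i) of the preceding theorem, whereas the paper cites Corollary~\ref{druga} to get $\T(a)\in P_{comm}(\B^\D)$ and then lifts via the inclusion $\T^{-1}(P_{comm}(\B^\D))\subseteq P_{comm}(\B\F_\T(\A))$; these two packagings use exactly the same ingredients.
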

\begin{proof} According to Corollary \ref{druga},
$\T (a)\in P_{comm} (\B^\D)$. Therefore, $a\in
\T^{-1}($ $P_{comm}(\B^\D))\subseteq P_{comm}(\T^{-1}(\B^\D))=P_{comm}
(\B\F_\T (\A))$.
\end{proof}

\indent Under the same assumptions in Corollary \ref{prethodna}, note that if $a\in \A$ is algebraic and
$a\in\R_\T (\A)$, then $\T (a)\in P_{comm} (\B^\D)$.

\indent Let $\A$ and $\B$ be two  unital Banach algebras and
consider a (not necessarily continuous) homomorphism  $\T: \A \to \B$. Recall that according to
\cite[Theorem 10.1]{ZDH}, the following statements are
equivalent:\par \noindent (i) The element $d\in\R_\T (\A)$.\par
\noindent (ii) If $a\in \A$ is such that $ad-da\in \T^{-1}(0)$, then
$\sigma_{\F_\T}(a)=\sigma_{\F_\T}(a+d)$.\par \noindent (iii) If
$a\in \A$ is such that $ad=ad$, then
$\sigma_{\F_\T}(a)=\sigma_{\F_\T}(a+d)$.\par \noindent (iv)
$\sigma_{\F_\T}(d)=\{0\}$.

 \indent In the following theorem, a similar result for the B-Fredholm spectrum will be considered.\par

\begin{teor}Let $\A$ and $\B$ be two  unital Banach algebras and consider
a (not necessarily continuous) homomorphism  $\T: \A \to \B$. Let $d\in\R_\T (\A)$. Then, the
following conditions are equivalent.\par \noindent {\rm (i)} $\T(d)$
is algebraic.\par \noindent {\rm (ii)} If $a\in \A$ is such that
$ad-da\in \T^{-1}(0)$, then
$\sigma_{\B\F_\T}(a+d)=\sigma_{\B\F_\T}(a))$.\par \noindent {\rm
(iii)}  If $a\in \A$ is such that $ad=da$, then
$\sigma_{\B\F_\T}(a+d)=\sigma_{\B\F_\T}(a))$.\par \noindent {\rm
(iv)} $\sigma_{\B\F_\T}(d)=\emptyset$.
\end{teor}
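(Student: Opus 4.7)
The plan is to establish the cyclic implications $(i)\Rightarrow(ii)\Rightarrow(iii)\Rightarrow(iv)\Rightarrow(i)$ by exploiting the identity $\sigma_{\B\F_\T}(x)=\sigma_\D(\T(x))$ to reduce every condition to a statement about the Drazin spectrum of the element $\T(d)\in\B$. Since $d\in\R_\T(\A)$, one has $\T(d)\in\B^{qnil}$, which is precisely the hypothesis needed to invoke Theorem \ref{ekvi} applied to $\T(d)$ in the algebra $\B$.

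For $(i)\Rightarrow(ii)$, assume $\T(d)$ is algebraic. By Theorem \ref{ekvi} applied in $\B$ to the quasinilpotent algebraic element $\T(d)$, for every $y\in\B$ commuting with $\T(d)$ one has $\sigma_\D(y+\T(d))=\sigma_\D(y)$. If $ad-da\in\T^{-1}(0)$, then applying $\T$ gives $\T(a)\T(d)=\T(d)\T(a)$, so setting $y=\T(a)$ yields $\sigma_\D(\T(a)+\T(d))=\sigma_\D(\T(a))$, which, since $\T$ is a homomorphism, is exactly $\sigma_{\B\F_\T}(a+d)=\sigma_{\B\F_\T}(a)$. The implication $(ii)\Rightarrow(iii)$ is immediate because $ad=da$ forces $ad-da=0\in\T^{-1}(0)$.

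For $(iii)\Rightarrow(iv)$, take $a=0$, which trivially commutes with $d$; then $\sigma_{\B\F_\T}(d)=\sigma_{\B\F_\T}(0)=\sigma_\D(0)=\emptyset$. Finally, for $(iv)\Rightarrow(i)$ one has $\sigma_\D(\T(d))=\sigma_{\B\F_\T}(d)=\emptyset$, and by \cite[Theorem 2.1]{Boasso} this is equivalent to $\T(d)$ being algebraic.

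There is no substantial obstacle: the whole argument is a clean transcription of Theorem \ref{ekvi} through the homomorphism $\T$, mirroring the structure of the cited result \cite[Theorem 10.1]{ZDH} for the ordinary Fredholm spectrum. The only point requiring care is verifying that the weak commutation condition $ad-da\in\T^{-1}(0)$ on the $\A$-side translates to genuine commutation of $\T(a)$ and $\T(d)$ in $\B$, so that the hypothesis of Theorem \ref{ekvi} is satisfied and the Drazin spectra can be compared inside $\B$.
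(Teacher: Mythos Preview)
Your proof is correct and follows essentially the same route as the paper: the cyclic chain $(i)\Rightarrow(ii)\Rightarrow(iii)\Rightarrow(iv)\Rightarrow(i)$ is established by pushing everything through $\T$ to $\B$, invoking Theorem \ref{ekvi} for $(i)\Rightarrow(ii)$, using $a=0$ for $(iii)\Rightarrow(iv)$, and appealing to the characterization of empty Drazin spectrum (the paper cites Theorem \ref{teo2.2}(iv), which in turn rests on \cite[Theorem 2.1]{Boasso}, the result you cite directly). Your write-up simply spells out more explicitly why the weak commutation $ad-da\in\T^{-1}(0)$ becomes genuine commutation in $\B$, which is exactly the point on which the paper's terse ``Apply Theorem \ref{ekvi}'' relies.
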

\begin{proof} (i)$\Longrightarrow$(ii). Apply Theorem \ref{ekvi}.\par
\noindent (ii)$\Longrightarrow$(iii). It is obvious.\par
\noindent (iii)$\Longrightarrow$(iv). Consider $a=0$. Then,
$\sigma_{\B\F}(d)=\sigma_{\B\F}(0)=\emptyset$.\par
\noindent (iv)$\Longrightarrow$(i). Apply Theorem \ref{teo2.2}(iv).
\end{proof}

\section{Perturbations of (generalized) B-Fredholm elements with equal spectral idempotents}

\noindent To prove the main results of this section, some prepartion is needed first.\par

\indent  Let $\A$ be a  unital Banach algebra and consider $a\in
\A^\KD$. Let $p=1-a^Da$. If $a\in\A^{-1}$, then $p=0$, but if
$0\in $ iso $\sigma (a)$, $p$ is the \it spectral idempotent \rm
of $a$ corresponding to $0$ and it will be written $p=a^\pi$. Note
that $ap=pa$, $ap\in\A^{qnil}$, \  $a(1-p), a^D \in
((1-p)\A(1-p))^{-1}$ and $a^D$ is the inverse of $(1-p)a$ in the
algebra $(1-p)\A(1-p)$ (\cite{R2}).\par

\begin{rem}\label{rem5.1}\rm Let $\A$ and $\B$ be two unital Banach algebras and consider a (non necessarily continuous) homomorphism $\T\colon\A\to\B$.\par
\noindent (a). Let $a\in\A$ such that $\T (a)\in \B^\KD$. Let $\T
(a)^\pi=q$ and suppose that there exist $p\in\A^\bullet$ such that
$\T (p)=q$ and $w\in (1-p)\A (1-p)$ such that $\T (w)=\T (a)^D=
((1-q)\T(a) (1-q))^{-1}\in ((1-q)\B (1-q))^{-1}$ ( \cite[Theorem
4.2]{Koliha}). Then, it is not difficult to prove the following
statements.\par \noindent (i) $(1-p)aw=1-p +c_1$ and $wa(1-p)=1-p
+c_2$, where $c_i\in \T^{-1}(0)\cap (1-p)\A (1-p)$, $i=1, 2$.\par
\noindent (ii) If $w'\in (1-p)\A(1-p)$ is such that $\T (w')=\T
(a)^D$, then $w'-w\in \T^{-1}(0)\cap (1-p)\A (1-p)$.\par \noindent
(b). Suppose in addition that   $\T\colon\A\to\B$ is surjective and
has the lifting property and consider $a\in \A$ as before, i.e., $\T
(a)\in \B^\KD$ and  $\T (a)^\pi=q$ . In particular, there exist
$p\in\A^\bullet$ such that $\T (p)=q$ and $z\in\A$ such that
$\T(a)^D=\T(z)$. However, since  $\T(a)^D\in (1-q)\B(1-q)$, it is
possible to choose $z\in (1-p)\A(1-p)$.
\end{rem}

\indent The results of Remark \ref{rem5.1} will be used in what follows.

\begin{prop}\label{prop5.2} Let $\A$ and $\B$ be two unital Banach algebras and consider a (non necessarily continuous) homomorphism $\T\colon\A\to\B$.
 Let $a_1\in\A$ such that $\T (a_1)\in\B^\KD$ and  $\T (a_1)^\pi=q$. Suppose that there exist $p\in\A^\bullet$ and $w_1\in (1-p)\A(1-p)$
such that $\T (p)=q$ and $\T (w_1)=\T (a_1)^D$. Let  $a_2\in\A$  and
define $z= 1+ \T(a_1)^D\T(a_2-a_1)$. Then, the following statements
hold.\par \noindent {\rm (i)} The element $z\in\B^{-1}$ if and only
if $p+w_1a_2\in\F_\T (\A)$.\par \noindent {\rm (ii)} Suppose that
$\T (a_2)\T (a_1)^\pi=\T (a_1)^\pi \T (a_2)$. Then, $z\in  \B^{-1}$
if and only $p+w_1a_2(1-p)\in\F_\T (\A)$.\par
\end{prop}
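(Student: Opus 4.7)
The plan is to show that both statements reduce to the single identity
\[
z \;=\; q + \T(a_1)^D\,\T(a_2),
\]
by computing $\T(p+w_1a_2)$ and $\T(p+w_1a_2(1-p))$ directly and checking they both equal $z$; the equivalences then follow because, by definition, $x\in\F_\T(\A)$ iff $\T(x)\in\B^{-1}$.

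First I would rewrite $z$ in a more useful form. From the Koliha--Drazin axioms applied to $\T(a_1)$ and $\T(a_1)^D$, one has $\T(a_1)^D\T(a_1)=\T(a_1)\T(a_1)^D=1-q$. Therefore
\[
z \;=\; 1 + \T(a_1)^D\T(a_2) - \T(a_1)^D\T(a_1) \;=\; q + \T(a_1)^D\T(a_2).
\]
This is the key identity; everything else is a matter of recognizing this expression on the algebra side.

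For statement (i), applying $\T$ to $p+w_1a_2$ and using $\T(p)=q$, $\T(w_1)=\T(a_1)^D$ gives
\[
\T(p+w_1a_2) \;=\; q + \T(a_1)^D\T(a_2) \;=\; z,
\]
so $z\in\B^{-1}$ iff $p+w_1a_2\in\F_\T(\A)$.

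For statement (ii), the argument is essentially the same after one commutation. I would expand
\[
\T(p+w_1a_2(1-p)) \;=\; q + \T(a_1)^D\T(a_2) - \T(a_1)^D\T(a_2)\,q.
\]
Since $w_1\in(1-p)\A(1-p)$, applying $\T$ yields $\T(w_1)=(1-q)\T(w_1)=\T(w_1)(1-q)$, so $\T(a_1)^Dq=q\T(a_1)^D=0$. Using the hypothesis $\T(a_2)q=q\T(a_2)$ we can move the trailing $q$ through: $\T(a_1)^D\T(a_2)q=\T(a_1)^Dq\T(a_2)=0$. Hence
\[
\T(p+w_1a_2(1-p)) \;=\; q + \T(a_1)^D\T(a_2) \;=\; z,
\]
and again $z\in\B^{-1}$ iff $p+w_1a_2(1-p)\in\F_\T(\A)$.

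The only point requiring care is justifying the two facts $\T(a_1)^D(1-q)=\T(a_1)^D$ and $\T(a_1)^Dq=0$; these are immediate from $\T(w_1)\in(1-q)\B(1-q)$, which in turn comes from $w_1\in(1-p)\A(1-p)$ together with $\T(p)=q$. Once those are in hand, both parts are essentially one-line computations, so I do not foresee a substantive obstacle.
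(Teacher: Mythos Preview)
Your proof is correct and follows essentially the same line as the paper: both reduce everything to the identity $z = q + \T(a_1)^D\T(a_2)$ and then recognize this as $\T(p+w_1a_2)$. For part~(ii) the paper routes the equivalence through the corner algebra $(1-q)\B(1-q)$, whereas you simply compute $\T(p+w_1a_2(1-p))=z$ directly using the commutation hypothesis and $\T(a_1)^Dq=0$; your version is slightly more streamlined, but the content is the same.
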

\begin{proof}(i). Note that  $z\in \B^{-1}$ if and only if $1+ \T(w_1(a_2-a_1))\in\B^{-1}$.
Since $\T (w_1a_1)=\T(w_1a_1(1-p))=1-q$, necessary and sufficient for $z\in \B^{-1}$ is that $q+\T (w_1a_2)\in \B^{-1}$,
which in turn is equivalent to $p+w_1a_2\in\F_\T (\A)$.\par

\noindent (ii). Since $\T (a_1)$ and $\T (a_2)$ commute with $q$,
$z\in \B^{-1}$ if and only if $1-q + \T (w_1(a_2-a_1)(1-p))\in
((1-q)\B(1-q))^{-1}$. Now, using an argument similar to one in the
proof of statement (i), it is not difficult to prove that $z\in
\B^{-1}$ if and only if $p+w_1a_2(1-p)\in\F_\T (\A)$.
\end{proof}

\indent In the following theorems, (generalized) B-Fredholm elements that have the same spectral idempotents
relative to the homomorphism $\T$ will be characterized.\par

\begin{teor}\label{teo5.3}Let $\A$ and $\B$ be two unital Banach algebras and consider a (non necessarily continuous)  homomorphism $\T\colon\A\to\B$.
Suppose in addition that $\T\colon \A\to \B$ is surjective and has
the lifting property. Let $a_1\in\mathcal{GBF}_\T (\A)$ and consider
$p\in\A^\bullet$ such that $\T (p)=\T (a_1)^\pi$. Then, the
following statements are equivalent.\par \noindent {\rm (i)}
$a_2\in\mathcal{GBF}_\T (\A)$ and $\T (a_1)^\pi=\T (a_2)^\pi$.\par
\noindent {\rm (ii)}  $pa_{2} (1-p)$ and $(1-p)a_2p\in \T^{-1}(0)$,
$pa_2p\in \R_\T (\A)$ and $p+a_2\in \F_\T (\A)$.\par \noindent {\rm
(iii)} $pa_{2} (1-p)$ and $(1-p)a_2p\in \T^{-1}(0)$, $pa_2p\in \R_\T
(\A)$ and $p+w_1a_2(1-p)\in \F_\T (\A)$, where $w_1\in (1-p)\A(1-p)$
is such that $\T (w_1)=\T (a_1)^D$.\par \noindent {\rm (iv)}
$a_2\in \mathcal{GBF}_\T (\A)$, $p+w_1a_2\in \F_\T (\A)$ and
$w_1=(p+w_1a_2)w_2 +c$, where $w_1$ is as in statement {\rm (iii)},
$w_2\in\A$ is such that $\T (w_2)=\T (a_2)^D$ and $c\in
\T^{-1}(0)$.\par
\end{teor}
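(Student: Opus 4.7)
The plan is a cyclic chain
$\mathrm{(i)}\Rightarrow\mathrm{(ii)}\Rightarrow\mathrm{(iii)}\Rightarrow\mathrm{(iv)}\Rightarrow\mathrm{(i)}$,
resting on Koliha's characterization of $\B^{\KD}$ via an associated idempotent
(\cite[Theorem 4.2]{Koliha}), the lifting property of $\T$ (so that
$q=\T(a_1)^\pi$ is realized as $\T(p)$ with $p\in\A^\bullet$), Remark \ref{rem5.1},
and Proposition \ref{prop5.2}.

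The equivalence $\mathrm{(i)}\Leftrightarrow\mathrm{(ii)}$ is a direct translation
of Koliha's theorem: $\T(a_2)\in\B^{\KD}$ with $\T(a_2)^\pi=q$ iff $q$ commutes
with $\T(a_2)$, $q\T(a_2)q\in\B^{qnil}$, and $\T(a_2)+q\in\B^{-1}$; pulling these
three conditions back through $\T$ and the idempotent $p$ yields exactly the three
clauses of (ii). For $\mathrm{(ii)}\Leftrightarrow\mathrm{(iii)}$, the commutator
clauses give that $q$ commutes with $\T(a_2)$, so the Peirce decomposition of
$\B$ along $q$ shows that $q+\T(a_2)\in\B^{-1}$ is equivalent to
$(1-q)\T(a_2)(1-q)\in((1-q)\B(1-q))^{-1}$, the $q\B q$-component being
automatically invertible by $q\T(a_2)q\in\B^{qnil}$. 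Multiplying by
$\T(w_1)=((1-q)\T(a_1)(1-q))^{-1}$ converts this to
$z=q+\T(w_1)\T(a_2)\in\B^{-1}$, which Proposition \ref{prop5.2}(ii) identifies
with $p+w_1a_2(1-p)\in\F_\T(\A)$.

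For $\mathrm{(iii)}\Rightarrow\mathrm{(iv)}$, the previous step already provides
$\T(a_2)\in\B^{\KD}$ with $\T(a_2)^\pi=q$, so $\T(a_2)^D\in(1-q)\B(1-q)$;
surjectivity of $\T$ (Remark \ref{rem5.1}(b)) allows a lift $w_2\in(1-p)\A(1-p)$
with $\T(w_2)=\T(a_2)^D$. A short calculation using $\T(a_2)\T(w_2)=1-q$ and
$\T(w_1)\in(1-q)\B(1-q)$ gives $z\T(w_2)=q\T(w_2)+\T(w_1)(1-q)=\T(w_1)$, so
$c:=w_1-(p+w_1a_2)w_2\in\T^{-1}(0)$ is the element required in (iv);
Proposition \ref{prop5.2}(i) delivers $p+w_1a_2\in\F_\T(\A)$.

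The main obstacle is $\mathrm{(iv)}\Rightarrow\mathrm{(i)}$, where the coincidence
of spectral idempotents must be extracted from the single relation
$\T(w_1)=z\T(w_2)$. Writing $q_2=\T(a_2)^\pi$ and expanding via
$\T(a_2)\T(w_2)=1-q_2$ yields $\T(w_1)q_2=q\T(w_2)$; multiplying by $1-q$ on the
left (and using $\T(w_1)\in(1-q)\B(1-q)$) gives $\T(w_1)q_2=0$, hence
$q\T(w_2)=0$, and left-multiplication by $\T(a_1)$ produces $(1-q)q_2=0$, i.e.\
$q_2=qq_2$. Right-multiplying $\T(w_1)=z\T(w_2)$ by $\T(a_2)$ and comparing with
$\T(w_1)\T(a_2)=z-q$ gives $zq_2=q$, whence $q_2=z^{-1}q$; idempotency of $q_2$
then forces $qz^{-1}q=q$, so $q(1-q_2)=q-qz^{-1}q=0$, i.e.\ $q=qq_2$. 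Combined
with $q_2=qq_2$ this yields $q_2=q$. Managing these twisted idempotent relations,
with full use of the invertibility of $z$, is the delicate point of the proof.
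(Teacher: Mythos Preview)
Your proof is correct and follows the same underlying approach as the paper: both reduce the problem to statements in $\B$ about $\T(a_1),\T(a_2)$ and their spectral idempotents, using Remark \ref{rem5.1} and Proposition \ref{prop5.2} to translate invertibility conditions into Fredholm conditions in $\A$. The paper compresses the equivalences in $\B$ into a single citation of \cite[Theorem 2.2]{R2} (Rako\v cevi\'c's characterization of Koliha--Drazin invertibles with equal spectral idempotents, which gives precisely the $\B$-level versions of (i)--(iv)), whereas you reprove those equivalences from scratch via Koliha's \cite[Theorem 4.2]{Koliha} and the Peirce decomposition. In particular, your explicit algebraic derivation of $(\mathrm{iv})\Rightarrow(\mathrm{i})$ --- extracting $q_2=q$ from the single relation $\T(w_1)=z\T(w_2)$ by the idempotent manipulations $\T(w_1)q_2=q\T(w_2)$, $(1-q)q_2=0$, $zq_2=q$, $qz^{-1}q=q$ --- is essentially the content of the cited result in \cite{R2}, so your argument is a self-contained unpacking of what the paper invokes as a black box.
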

\begin{proof}Apply Proposition \ref{prop5.2} to \cite[Theorem 2.2]{R2} and note the following fact (statement (iv)).\par

\indent According to the proof of Proposition \ref{prop5.2}(i), the identity
$$
\T( a_2)^D=(1+\T (a_1)^D\T(a_2-a_1))^{-1}\T (a_1)^D
$$
is equivalent to
$$
(q+ \T (w_1a_2))\T (w_2)=\T (w_1),
$$
which in turn is equivalent to $w_1=(p+w_1a_2)w_2 +c$, $c\in
\T^{-1}(0)$.
\end{proof}

\begin{rem}\label{rem5.5}\rm Under the same hypothesis of Theorem \ref{teo5.3}, let $a_2\in\A$ be such that
$a_2\in \mathcal{GBF}_\T (\A)$ and $\T (a_1)^\pi=\T (a_2)^\pi$.\par
\noindent (i) In particular, $w_2\in (1-p)\A (1-p)$ and
$$
w_1=(p+w_1a_2)w_2 +c= w_1a_2w_2+c= w_1(1-p)a_2(1-p)w_2 +c,
$$
where $c\in \T^{-1}(0)\cap (1-p)\A (1-p)$.\par \noindent (ii)
Consider again the identity
$$
\T (a_2)^D=(1+ \T(a_1)^DT(a_2-a_1))^{-1}\T (a_1)^D.
$$
It is not difficult to prove that
$$
\T (w_2a_1(1-p))\T (w_1a_2(1-p))=\T (w_1a_2(1-p))\T (w_2a_1(1-p))=1-q.
$$
However,
since $z= q+\T (w_1a_2(1-p))$ (Proposition \ref{prop5.2}(ii)), $z^{-1}=q+\T (w_2a_1(1-p))$.\par
In particular,
$$
\T (a_2)^D=(1+ \T(a_1)^DT(a_2-a_1))^{-1}\T (a_1)^D
$$
is equivalent to $w_2=w_2a_1(1-p)w_1 +d$, $d\in \T^{-1}(0)\cap
(1-p)\A(1-p)$.
\end{rem}

\indent In the next theorem B-Fredholm elements that have the same spectral idempotents
relative to the homomorphism $\T$ will be characterized.\par

\begin{teor}\label{teo5.4}Let $\A$ and $\B$ be two unital Banach algebras and consider a (non necessarily continuous)  homomorphism $\T\colon\A\to\B$.
Suppose in addition that $\T\colon \A\to \B$ is surjective and has
the lifting property. Let $a_1\in\mathcal{BF}_\T (\A)$ and consider
$p\in\A^\bullet$ such that $\T (p)=\T (a_1)^\pi$. Then, the
following statements are equivalent.\par \noindent {\rm (i)} $a_2\in
\mathcal{BF}_\T (\A)$ and $\T (a_1)^\pi=\T (a_2)^\pi$.\par \noindent
{\rm (ii)}  $pa_{2} (1-p)$ and $(1-p)a_2p\in \T^{-1}(0)$, $pa_2p\in
\N_\T (\A)$ and $p+a_2\in \F_\T (\A)$.\par \noindent {\rm (iii)}
$pa_ {2}(1-p)$ and $(1-p)a_2p\in \T^{-1}(0)$, $pa_2p\in \N_\T (\A)$
and $p+w_1a_2(1-p)\in \F_\T (\A)$, where $w_1\in (1-p)\A(1-p)$ is
such that $\T (w_1)=\T (a_1)^d$.\par \noindent {\rm (iv)} $a_2\in
\mathcal{BF}_\T (\A)$, $p+w_1a_2\in \F_\T (\A)$ and
$w_1=(p+w_1a_2)w_2 +c$, where $w_1$ is as in statement {\rm (iii)},
$w_2\in\A$ is such that $\T (w_2)=\T (a_2)^d$ and $c\in
\T^{-1}(0)$.\par
\end{teor}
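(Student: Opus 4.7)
The plan is to deduce Theorem \ref{teo5.4} as a ``nilpotent refinement'' of Theorem \ref{teo5.3}, using the elementary observation that an element $b\in\B$ is Drazin invertible if and only if it is Koliha-Drazin invertible with $b\,b^\pi\in\B^{nil}$. By uniqueness of the spectral idempotent, the Drazin spectral idempotent of $\T(a_1)$ coincides with its Koliha-Drazin spectral idempotent, and $\T(a_1)^d=\T(a_1)^D$ whenever both are defined. Hence the $p\in\A^\bullet$ and $w_1\in(1-p)\A(1-p)$ from Remark \ref{rem5.1}(b) can be chosen simultaneously in the Drazin setting as well.

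First I would note that the proof of Proposition \ref{prop5.2} nowhere uses quasinilpotency of $\T(a_1)q$: only the invertibility of $\T(a_1)(1-q)$ in $(1-q)\B(1-q)$ enters. Consequently the equivalences $z\in\B^{-1}\Longleftrightarrow p+w_1a_2\in\F_\T(\A)$ and (under $\T(a_2)q=q\T(a_2)$) $z\in\B^{-1}\Longleftrightarrow p+w_1a_2(1-p)\in\F_\T(\A)$ transfer verbatim, with \cite[Proposition 1(a)]{RS} playing the role of \cite[Theorem 4.2]{Koliha}. I would then replay the proof of Theorem \ref{teo5.3} essentially word for word, reading every occurrence of $\mathcal{GBF}_\T(\A)$ as $\mathcal{BF}_\T(\A)$ and every occurrence of the Koliha-Drazin inverse as the Drazin inverse. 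The identity $w_1=(p+w_1a_2)w_2+c$ in (iv) is extracted exactly as in Theorem \ref{teo5.3} from $\T(a_2)^d=(1+\T(a_1)^d\T(a_2-a_1))^{-1}\T(a_1)^d$, lifted to $\A$ through the surjectivity and lifting property of $\T$.

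It remains only to match the nilpotency condition $pa_2p\in\N_\T(\A)$ in (ii) and (iii) with the Drazin (as opposed to Koliha-Drazin) invertibility of $\T(a_2)$. For this I would use
\[
\T(a_2)\,\T(a_2)^\pi \;=\; \T(a_2)\,q \;=\; \T(pa_2),
\]
and then invoke $pa_2(1-p)\in\T^{-1}(0)$ to rewrite $\T(pa_2)=\T(pa_2p)$. Thus $\T(a_2)\T(a_2)^\pi\in\B^{nil}$ if and only if $pa_2p\in\N_\T(\A)$, which is precisely the difference between the Riesz condition appearing in Theorem \ref{teo5.3} and the nilpotency condition appearing here. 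The main obstacle, which is however essentially bookkeeping, is to verify that this single nilpotent/Riesz substitution is the \emph{only} change needed: concretely, one must check in each of the four implications of Theorem \ref{teo5.3} that the passage from $\B^{qnil}$ to $\B^{nil}$ in the spectral idempotent characterization is preserved by all the manipulations performed on $p,w_1,w_2$, which it is, since nilpotency is stable under the relevant conjugations and sums with elements of $\T^{-1}(0)$.
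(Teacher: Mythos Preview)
Your proposal is correct and follows essentially the same approach as the paper: the paper's proof simply states that the arguments of Remark \ref{rem5.1}, Proposition \ref{prop5.2} and Theorem \ref{teo5.3} carry over verbatim with nilpotent elements in place of quasi-nilpotent elements, and that the Drazin analogue of \cite[Theorem 2.2]{R2} holds under this substitution. Your write-up is in fact more explicit than the paper's, since you spell out why the single $\R_\T\to\N_\T$ swap (via $\T(a_2)\,q=\T(pa_2p)$) is the only change needed and why Proposition \ref{prop5.2} is insensitive to the nilpotent/quasinilpotent distinction.
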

\begin{proof} The same arguments in Remark \ref{rem5.1}, Proposition \ref{prop5.2} and Theorem \ref{teo5.3} apply to the case
of B-Fredholm elements using nilpotent elements instead of quasi-nilpotent elements. What is more, when considering Drazin invertible
Banach algebra elements,  statements similar to the ones in \cite[Theorem 2.2]{R2} hold, if nilpotent elements instead of quasi-nilpotent elements are used.
\end{proof}

\noindent Recall that in Theorem \ref{4444}(ii) (respectively Theorem \ref{inkluzije}(vii)) conditions assuring that the product of two generalized B-Fredholm elements (respectively two B-Fredholm elements) is generalized B-Fredholm (respectively B-Fredholm) were given. In the following theorem more information concerning this problem will be given.\par

\begin{teor}\label{teo5.6}Let $\A$ and $\B$ be two unital Banach algebras and consider a (non necessarily continuous)  homomorphism $\T\colon\A\to\B$ such that
$\T$ is surjective and has the lifting property. Let
$a_i\in\mathcal{GBF}_\T (\A)$, $i=1, 2$, such that $\T (a_1)^\pi=\T
(a_2)^\pi=q$ and $a_1a_2-a_2a_1\in \T^{-1}(0)$. Let $p\in\A^\bullet$
such that $\T (p)=q$. Then, $a_1a_2\in   \mathcal{GBF}_\T (\A)$, $\T
(a_1a_2)^\pi=q$ and if $w_1, w_2$ and $w_{12}\in (1-p)\A(1-p)$ are
such that  $\T (w_1)=\T (a_1)^D$, $\T (w_2)=\T (a_2)^D$ and $\T
(w_{12})=\T (a_1a_2)^D$, then $w_{12}=w_2w_1+c$, $c\in \T^{-1}(0)$.
\end{teor}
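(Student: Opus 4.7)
The plan is to push the problem through $\T$ into $\B$, establish the corresponding product formula for commuting Koliha--Drazin invertible elements with a common spectral idempotent, and then pull the result back to $\A$ using the surjectivity and lifting property. Set $b_i=\T(a_i)\in \B^{\KD}$ for $i=1,2$. By hypothesis $b_1b_2=b_2b_1$ and $b_1^{\pi}=b_2^{\pi}=q$; moreover, each $b_i$ commutes with $q$, the element $b_iq$ lies in $(q\B q)^{qnil}$, and $(1-q)b_i(1-q)$ is invertible in $(1-q)\B(1-q)$ with inverse $b_i^D$ (cf.\ Remark \ref{rem5.1}).

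The main computation is to show $b_1b_2\in \B^{\KD}$, that $(b_1b_2)^{\pi}=q$, and that $(b_1b_2)^D=b_2^Db_1^D$. First I would use the decomposition $\B=(1-q)\B(1-q)\oplus q\B q$ (since $q$ is central for both $b_i$ and hence for $b_1b_2$). In the corner $(1-q)\B(1-q)$, the element $(1-q)b_1b_2(1-q)=((1-q)b_1(1-q))((1-q)b_2(1-q))$ is a product of invertible elements, hence invertible, with inverse equal to $b_2^Db_1^D=b_1^Db_2^D$. In the corner $q\B q$, the element $b_1b_2 q=(b_1q)(b_2q)$ is the product of two commuting quasinilpotents, which is quasinilpotent. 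Applying \cite[Theorem 4.2]{Koliha}, $b_1b_2\in \B^{\KD}$ with spectral idempotent $q$ and Koliha--Drazin inverse $b_2^Db_1^D$.

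Now transfer this information back to $\A$. Since $\T(a_1a_2)=b_1b_2\in \B^{\KD}$, the definition of $\mathcal{GBF}_\T(\A)$ yields $a_1a_2\in \mathcal{GBF}_\T(\A)$; and $\T(a_1a_2)^{\pi}=q$ was just proved. For the formula relating $w_1,w_2,w_{12}$, observe
\[
\T(w_{12})=\T(a_1a_2)^D=b_2^Db_1^D=\T(w_2)\T(w_1)=\T(w_2w_1),
\]
so $w_{12}-w_2w_1\in \T^{-1}(0)$; setting $c=w_{12}-w_2w_1$ gives $w_{12}=w_2w_1+c$ as required. (The existence of $w_i,w_{12}\in (1-p)\A(1-p)$ with the prescribed $\T$-image is supplied by Remark \ref{rem5.1}(b), using surjectivity together with the lifting property.)

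I expect the only non-routine step to be the verification that $b_1b_2q$ is quasinilpotent and that $(1-q)b_1b_2(1-q)$ is invertible in $(1-q)\B(1-q)$, i.e., that the common spectral idempotent is preserved under the product. This rests on the two basic facts that $(i)$ each $b_i$ commutes with $q$, so $q$ is a central idempotent in the subalgebra they generate, and $(ii)$ the product of two commuting quasinilpotents is quasinilpotent. Once these are in place, the lifting identities and the formula $w_{12}=w_2w_1+c$ reduce to applying $\T$ to both sides of the clean identity $(b_1b_2)^D=b_2^Db_1^D$.
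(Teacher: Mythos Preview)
Your proof is correct and reaches the same conclusions as the paper, but the route is slightly different. The paper establishes $a_1a_2\in\mathcal{GBF}_\T(\A)$ by invoking Theorem \ref{4444}(ii) and then quotes \cite[Theorem 5.5]{Koliha} directly to get the product formula $\T(a_1a_2)^D=\T(a_1)^D\T(a_2)^D=\T(a_2)^D\T(a_1)^D$ for commuting Koliha--Drazin invertible elements; from there it computes $\T(a_1a_2)^\pi$ algebraically via $1-\T(a_1a_2)\T(a_1a_2)^D$ and simplifies using $\T(a_1)^\pi=\T(a_2)^\pi=q$. You instead reprove this special case of \cite[Theorem 5.5]{Koliha} by hand: using that $q$ commutes with $b_1,b_2$, you split $b_1b_2$ into an invertible part in $(1-q)\B(1-q)$ and a quasinilpotent part in $q\B q$, and then appeal to \cite[Theorem 4.2]{Koliha} to read off both $(b_1b_2)^\pi=q$ and $(b_1b_2)^D=b_2^Db_1^D$ simultaneously. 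Your approach is more self-contained and makes transparent why the common spectral idempotent is preserved; the paper's is shorter since it outsources the product formula to Koliha's theorem. One cosmetic remark: the line ``the decomposition $\B=(1-q)\B(1-q)\oplus q\B q$'' is literally false unless $q$ is central in $\B$; what you actually use (and what your parenthetical makes clear) is that $b_1b_2$ lies in the commutant of $q$ and so decomposes along $q$. The final transfer back to $\A$ via $\T(w_{12})=\T(w_2w_1)$ is identical in both arguments.
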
\label{teo5.4}
\begin{proof}According to Theorem \ref{4444}(ii), $a_1a_2 \in \mathcal{GBF}_\T(\A)$.
Moreover, since $\T(a_1), \T(a_2) \in \B^\KD$ and
$\T(a_1)\T(a_2)=\T(a_2)\T(a_1)$, according to \cite[Theorem 5.5]{Koliha},
$\T(a_1a_2)^D=\T(a_1)^D\T(a_2)^D=\T(a_2)^D\T(a_1)^D$.
Further, since
$\T(a_1)^\pi=\T(a_2)^\pi=q$,
\begin{eqnarray*}
\T(a_1a_2)^\pi&=&1-\T(a_1a_2)\T(a_1a_2)^D=1-\T(a_1)\T(a_2)\T(a_2)^D\T(a_1)^D\\&=&1-\T(a_1)(1-q)\T(a_1)^D=1-(1-q)\T(a_1)\T(a_1)^D\\&=&
1-(1-q)(1-q)=1-(1-q)\\
&=&q.\\
\end{eqnarray*}

\indent Since $\T(a_1a_2)^D=\T(a_2)^D\T(a_1)^D$, $\T(a_1a_2)^D=\T (w_2)\T
(w_1)$. Consequently, $w_{12}=w_2w_1+c$, $c\in
\T^{-1}(0)$.
\end{proof}

\bigskip

\noindent {\bf Acknowledgements.} The first and third authors
are supported by the Ministry of Education, Science and
Technological Development, Republic of Serbia, grant no. 174007.


\begin{thebibliography}{99}


\bibitem{Bak}H. Baklouti, \emph{ $\mathcal{T}$-Fredholm analysis and applications to operator theory}, J. Math. Anal. Appl.
369 (2010), 283-289.

\bibitem {Ber3} M. Berkani,  \emph{ On a class of quasi-Fredholm
operators}, Integral Equations Operator Theory 34 (1999), 244-249.

\bibitem {Ber2} M. Berkani, \emph{ Index of B-Fredholm operators and
generalization of a Weyl's theorem}, Proc. Amer. Math. Soc. 130
(2002), 1717-1723.

\bibitem {Ber1} M. Berkani and M. Sarih, \emph{  An Atkinson-type theorem
for B-Fredholm operators}, Studia Math. 148 (2001), 251-257.

\bibitem{Bo} E. Boasso, \emph{ Drazin spectra of Banach space operators and Banach
algebra elements}, J. Math. Anal. Appl. 359 (2009), 48-55.\par

\bibitem {Boasso} E. Boasso, \emph{ The Drazin spectrum in Banach
algebras}, An Operator Theory Summer: Timisoara June 29-July 4 2010, International Book Series of Mathematical Texts, Theta Foundation, Bucharest, 2012, 21-28

\bibitem {Boasso1} E. Boasso, \emph{ Isolated spectral points and Koliha-Drazin invertible elements in quotient Banach algebras and homomorphism ranges}, submitted;
http://arxiv.org/abs/1403.3663.

\bibitem {Dragan} D. Djordjevi\'c, \emph{ Regular and $T$-Fredholm
elements in Banach algebras}, Publ. Inst. Math.  (Beograd) (N.S.) 56 (70)
(1994), 90-94.

\bibitem{Dr} M. P. Drazin, \emph{ Pseudo-inverses in associative rings and semigroups}, Amer. Math. Monthly 65
(1958), 506-514.

\bibitem{GR} J.J. Grobler and  H. Raubenheimer, \emph{ Spectral properties of elements in different Banach algebras}, Glasgow Math. J. 33 (1991), 11-20.

\bibitem {Harte} R. Harte, \emph{ Fredholm theory relative to a Banach algebra
homomorphism}, Math. Z. 179 (1982), 431-436.

\bibitem{H3}R.  Harte, \emph{ Fredholm, Weyl and Browder theory}, Proc. R. Ir. Acad. 85A (1985), 151-176.

\bibitem{H2}R.  Harte, \emph{ Regular boundary elements}, Proc. Amer. Math. Soc. 99 (1987), 328-330.

\bibitem{H4}R.  Harte, \emph{ Fredholm, Weyl and Browder theory II}, Proc. R. Ir. Acad. 91A (1991), 79-88.

\bibitem{H5}R.  Harte and H. Raubenheimer, \emph{ Fredholm, Weyl and Browder theory III}, Proc. R. Ir. Acad. 95A (1995), 11-16.


\bibitem{K} C. King,  \emph{ A note on Drazin inverses}, Pacific J. Math. 70 (1977), 383-390.\par

\bibitem {Koliha} J. J. Koliha, \emph{ A generalized Drazin inverse},
Glasgow Math. J. 38 (1996), 367-381

\bibitem{KM} V. Kordula and V. M\"uller, \emph{ On the axiomatic theory of spectrum}, Studia Math. 119 (1996),
109-128.

\bibitem{Lu}R. A. Lubansky, \emph{ Koliha-Drazin invertibles form a regularity},
Math. Proc. Roy. Ir. Acad. 107A (2007), 137-141.

\bibitem{MR1}T. Mouton and H. Raubenheimer, \emph{ Fredholm theory relative to two Banach algebra
homomorphisms}, Quaest. Math. 14 (1991), 371-382.

\bibitem{MR}H. du T. Mouton and H. Raubenheimer, \emph{ More on Fredholm theory relative to a Banach algebra homomorphisms}, Math. Proc. R. Ir. Acad. 93A (1993), 17-25.

\bibitem{MMH} H. du T. Mouton, S. Mouton and H. Raubenheimer, \emph{ Ruston elements and Fredholm theory relative to arbitrary homomorphisms},
Quaest Math. 34 (2011), 341-359.

\bibitem{Mu2} V. M\"uller, \emph{ On the Kato-decomposition of quasi-Fredholm and B-Fredholm operators}, Preprint ESI 1013, Vienna, 2001.

\bibitem {Mu} V. M\"uller, \emph{ Spectral theory of linear operators and spectral
systems in Banach algebras}, Operator Theory, Advances and Applications 139, Birkh\"auser Verlag, Basel-Boston-Berlin, 2007.


\bibitem{R} V. Rako\v cevi\' c, \emph{ A note on regular elements in Calkin algebras},
Collect. Math. 43 (1992), 37-42.


\bibitem{R2} V. Rako\v cevi\' c, \emph{ Koliha-Drazin invertible operators and commuting Riesz perturbations},
Acta Sci. Math. (Szeged) 68 (2002), 953-963.

\bibitem{R3} V. Rako\v cevi\' c, \emph{ Koliha-Drazin inverse and perturbations},
Rend. Circ. Mat. Palermo (2) Suppl. 73 (2004), 101-125.

\bibitem{RS} S. Roch, B. Silbermann, \emph{ Continuity of generalized inverses in Banach algebras}, Studia  Math.
 13 (1999), 197-227.\par

\bibitem {Dragana} G.F. Zhuang, J.L. Chen, D.S. Cvetkovi\'c-Ili\'c and Y.M. Wei, \emph{ Additive property of Drazin invertibility of
elements in a ring}, Linear Multilinear Algebra 60 (2012), 903-910.

\bibitem{ZH}S. \v C. \v Zivkovi\'c-Zlatanovi\'c and R. Harte, \emph{ On almost essentially Ruston elements of a Banach
algebra}, Filomat 24 (2010), 149-155.

\bibitem{ZDH}S. \v C. \v Zivkovi\'c-Zlatanovi\'c, D. S. Djordjevi\'c and R. Harte, \emph{ Ruston, Riesz and perturbation classes},
J. Math. Anal. Appl. 389 (2012), 871-886.

\bibitem{ZDH2}S. \v C. \v Zivkovi\'c-Zlatanovi\'c, D. S. Djordjevi\'c and R. Harte, \emph{ Polynomially Riesz perturbations},
J. Math. Anal. Appl. 408 (2013), 442-451.

\end{thebibliography}
\end{document}